\newcommand{\wt}[1]{\widetilde{#1}} 
\newcommand{\ol}[1]{\overline{#1}} 
\newcommand{\wh}[1]{\widehat{#1}} 
\newcommand{\twogroup}{\mu_{2}}
\newtheorem{theorem}{Theorem}
\newtheorem{corollary}[theorem]{Corollary}
\newtheorem{lemma}[theorem]{Lemma}
\newtheorem{proposition}[theorem]{Proposition}
\newtheorem{remark}[theorem]{Remark}
\newtheorem{proposition-definition}[theorem]{Proposition-Definition}
\newtheorem{conjecture}[theorem]{Conjecture}
\newtheorem{question}[theorem]{Question}
\newcommand{\p}{\mathbb{P}}
\newcommand{\C}{\mathbb{C}}
\newcommand{\Q}{\mathbb{Q}}
\newcommand{\A}{\mathbb{A}}
\newcommand{\Z}{\mathbb{Z}}
\newcommand{\F}{\mathbb{F}}
\newcommand{\Hom}{\mathrm{Hom}}
\newcommand{\Spec}{\operatorname{Spec}}
\newcommand{\Km}{\operatorname{Km}}
\newcommand{\Br}{\operatorname{Br}}
\newcommand{\Gal}{\operatorname{Gal}}
\newcommand{\Pic}{\operatorname{Pic}}
\newcommand{\NS}{\operatorname{NS}}
\newcommand{\HH}{\operatorname{H}}
\newcommand{\res}{\operatorname{res}}
\newcommand{\Aut}{\operatorname{Aut}}
\newcommand{\tors}{\operatorname{tors}}
\begin{document}

\title[The Brauer--Manin obstruction and ranks of twists]{The Brauer--Manin obstruction on Kummer varieties and ranks of twists of abelian varieties}
\author{David Holmes and Ren\'e Pannekoek}
\date{\today}
\maketitle
\begin{abstract}
Let $r > 0$ be an integer. We present a sufficient condition for an abelian variety $A$ over a number field $k$ to have infinitely many quadratic twists of rank at least $r$, in terms of the density properties of rational points on the Kummer variety $\Km(A^r)$ of the $r$-fold product of $A$ with itself. 

One consequence of our results is the following. Fix an abelian variety $A$ over $k$, and suppose that for some $r>0$ the Brauer--Manin obstruction to weak approximation on the Kummer variety $\Km(A^r)$ is the only one. Then $A$ has a quadratic twist of rank at least $r$. Hence if the Brauer--Manin obstruction is the only one to weak approximation on all Kummer varieties, then ranks of twists of any positive-dimensional abelian variety are unbounded. 
\end{abstract}

\section{Introduction}


\subsection{Ranks of twists of abelian varieties}

Given an abelian variety $A$ over the number field $k$, the abelian group $A(k)$ is finitely generated by the Mordell--Weil theorem. Hence there exists an isomorphism
$$
A(k) \cong A(k)_{\tors} \oplus \Z^r
$$
for some non-negative integer $r$, which is called the \emph{rank} of $A(k)$, or sometimes simply the rank of $A$ when the ground field $k$ is clear. The rank of an abelian variety is a much-studied invariant, about which many open questions remain. 
\begin{enumerate}
\item For a fixed number field $k$, and an integer $d>0$, is the rank of $A$ bounded as $A$ ranges over all $d$-dimensional abelian varieties over $k$?
\item For a fixed number field $k$, and an abelian variety $A$ over $k$, is the rank of $A^c$ bounded as $A^c$ ranges over all quadratic twists of $A$?
\end{enumerate}
The questions above are wide open. For example, it is not known whether there exists an elliptic curve $E/\Q$ of rank at least 29. Honda \cite{MR0155828} conjectured that the second question has a positive answer if $\dim(A)=1$, but this conjecture is not now uniformly believed \cite{rubin_silverberg}. We will not answer either of these questions, but we will relate them to another open question concerning the Brauer--Manin obstruction to weak approximation on Kummer varieties.

\subsection{Weak approximation} 
Let $X$ be a smooth, projective, geometrically irreducible variety over a number field $k$. We denote by $X(\A_k)$ the topological space of adelic points of $X$. By the properness of $X$, there is a canonical bijection
$$
X(\A_k) \stackrel{\sim}{\rightarrow} \prod_{v \in \Omega_k} X(k_v),
$$
where $\Omega_k$ is the set of places of $k$. One can ask whether the set $X(k)$ of rational points on $X$ is \emph{dense} in $X(\A_k)$, in other words, whether $X$ satisfies \emph{weak approximation}.

For certain special classes of varieties, the answer to this question is known to be affirmative. For example, this is the case if $X$ is equal to $\p^n_k$ for some integer $n \geq 0$, if $X$ is a quadric hypersurface in $\p^n_k$, if $X$ is a cubic hypersurface in $\p^n_k$ and $n \geq 16$, if $X$ is the intersection of two quadrics in $\p^n_k$ and $n \geq 8$, or if $X$ is a del Pezzo surface of degree $\geq 5$ (see \cite{harari} and \cite[Theorem 29.4]{cubic}).  

It is certainly not true that weak approximation holds for general (smooth, projective, geometrically integral) varieties $X$ over $k$. Some counterexamples to weak approximation can be explained by the \emph{Brauer--Manin pairing}. This is a pairing 
$$
\left\langle.,.\right\rangle \colon X(\A_k) \times \Br(X) \rightarrow \Q/\Z,
$$
defined using local class field theory (see \cite[5.2]{torsors}). The pairing $\left\langle .,. \right\rangle$ is a homomorphism whenever the first variable is fixed, and a continuous map whenever the second variable is fixed, if $\Q/\Z$ is given the discrete topology. The \emph{Brauer--Manin set} $X(\A_k)^{\Br}$ of $X$ is defined as the left kernel of the Brauer--Manin pairing. It is a closed subset of $X(\A_k)$. It follows from global class field theory that $X(k)$ is contained in $X(\A_k)^{\Br}$, and hence the same is true for its topological closure, denoted $\ol{X(k)}$ (again, see \cite[5.2]{torsors}). 


If $X(\A_k)^{\Br}$ is strictly contained in $X(\A_k)$, then $X(k)$ is not dense in $X(\A_k)$; in this case, we say that there is a \emph{Brauer--Manin obstruction to weak approximation}. If $\ol{X(k)}$ equals $X(\A_k)^{\Br}$, then we say that the Brauer--Manin obstruction to weak approximation on $X$ \emph{is the only one}. 

In some cases it is known that the Brauer--Manin obstruction to weak approximation on $X$ is the only one; for example, this is the case if $X$ is a del Pezzo surface of degree $4$ such that $X(k) \neq \emptyset$ (see \cite{salsko}), if $k=\Q$ and $X$ admits a conic bundle structure $X \rightarrow \p^1_{\Q}$ with at least one degenerate fibre and such that all degenerate fibres are defined over $\Q$ (see \cite{browningetal}), or if $X$ is a smooth compactification of a homogeneous space of a connected linear group with connected stabilizers (see \cite{borovoi}). On the other hand, examples of $X$ with $\ol{X(k)} \neq X(\A_k)^{\Br}$ can be found in many places (e.g., see \cite{beyond}). 

Lastly, we mention the following conjecture by Jean-Louis Colliot-Th\'el\`ene \cite{avr}: 
\begin{conjecture}\label{jlct}
Let $X$ be a smooth, projective, geometrically integral variety over a number field $k$, and assume that $X_{\ol{k}}$ is birationally equivalent with $\p^n_{\ol{k}}$ for some integer $n$. Then the Brauer--Manin obstruction to weak approximation on $X$ is the only one.
\end{conjecture}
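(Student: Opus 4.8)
Conjecture~\ref{jlct} is one of the central open problems in the subject, and it will not be settled here; what follows is a description of the shape that an attack on it would most plausibly take, together with the point at which every known approach breaks down. The first reduction is that the statement is purely birational: for a smooth, projective, geometrically integral $X/k$, both the closure $\ol{X(k)}$ and the Brauer--Manin set $X(\A_k)^{\Br}$ depend, compatibly, only on the $k$-birational class of $X$. This follows from the Lang--Nishimura lemma together with functoriality of the Brauer--Manin pairing and purity for the Brauer group. One is therefore free to replace $X$ by any convenient smooth projective model. In dimension $2$, where $X_{\ol{k}}$ being rational means $X$ is a geometrically rational surface, one passes to a minimal model: either a del Pezzo surface of degree $d\in\{1,\dots,9\}$ with a rational point, or a conic bundle over a conic. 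The cases $d\ge 5$ are known (weak approximation holds outright), and $d=4$ with $X(k)\neq\emptyset$ is the theorem of Salberger--Skorobogatov quoted above, so the surface case reduces to del Pezzo surfaces of degree $\le 3$ and to conic bundles; the higher-dimensional case has no comparable classification.

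The principal tool I would use is descent in the sense of Colliot-Th\'el\`ene--Sansuc. For $X$ geometrically rational one has $\ol{k}[X]^\times=\ol{k}^\times$ and $\Pic(X_{\ol{k}})$ a finitely generated torsion-free $\Gal(\ol{k}/k)$-module, so there is a universal torsor $\mathcal{T}\to X$ under the N\'eron--Severi torus $T$ dual to $\Pic(X_{\ol{k}})$, and the fundamental exact sequence of descent theory yields
$$
X(\A_k)^{\Br} \;=\; \bigcup_{\sigma\in \HH^1(k,T)} \lambda_\sigma\bigl(\mathcal{T}^\sigma(\A_k)\bigr),
$$
the union being over the finitely many relevant twisted forms. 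Thus it suffices to prove that $X(k)$ is dense in each piece $\lambda_\sigma(\mathcal{T}^\sigma(\A_k))$, which in turn follows if rational points are dense in $\mathcal{T}^\sigma(\A_k)$ for each $\sigma$. This is where the geometry of universal torsors enters: for many rational surfaces they have explicit affine descriptions --- open subsets of affine space, of quadrics, or of intersections of quadrics --- and for such varieties one hopes to establish weak approximation directly, either by the Hardy--Littlewood circle method (when there are enough variables relative to the degree) or by the fibration method.

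Concretely, the fibration step would run as follows: exhibit a dominant morphism, or at least a dominant rational map, $f\colon\mathcal{T}^\sigma\to\p^1$ whose generic fibre is rational and satisfies weak approximation --- for instance a pencil of Severi--Brauer varieties, of quadrics, or of torsors under tori --- and then combine Harari's formal lemma (controlling the Brauer classes pulled back from the base) with his fibration theorem over $\p^1$ to deduce density of rational points in the Brauer--Manin set of $\mathcal{T}^\sigma$, hence of $X$. Matching up $\Br(X)$, $\Br(\mathcal{T}^\sigma)$ and the Brauer groups of the fibres, and checking the hypotheses on the degenerate fibres, is technical but, where the method applies, routine. The genuine obstacle is that for the hardest cases this scheme simply fails: the universal torsors of cubic surfaces are affine varieties cut out by quadrics in affine space, with codimension and number of variables unfavourable for the circle method and with no usable rational fibration over $\p^1$; for del Pezzo surfaces of degrees $1$ and $2$ the situation is worse; and in higher dimension there is no structural input to exploit at all. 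So the main obstacle --- and precisely the reason the conjecture remains open --- is the absence of any method, analytic or geometric, that handles low-degree del Pezzo surfaces and general rationally connected varieties, and a real proof of Conjecture~\ref{jlct} would have to introduce a fundamentally new idea at exactly this point.
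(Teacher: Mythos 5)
This statement is an open conjecture of Colliot-Th\'el\`ene, which the paper merely states and cites as motivation (via Question~\ref{jlct_kummer}); it offers no proof, and none is known. You correctly refrain from claiming one, and your survey of the descent/fibration strategy and of where it breaks down is accurate, so there is nothing to compare against a proof in the paper.
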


\subsection{Kummer varieties and the Brauer--Manin obstruction}

For an abelian variety $B$ over $k$, we write $\Km(B)$ for its \emph{Kummer variety}, i.e. $\Km(B)$ is the   quotient of $B$ by multiplication by $-1$; it is a geometrically integral projective variety. We note that, if $\dim(B)>1$, then $\Km(B)$ is never smooth. Conjecture \ref{jlct} suggests that the following question could be a reasonable one.
\begin{question}\label{jlct_kummer}
Let $k$ be a number field and $A$ an abelian variety over $k$. Let $r > 0$ an integer. Let $X$ be a smooth projective variety birationally equivalent to the Kummer variety $\Km(A^r)$. Is it
true that the Brauer--Manin obstruction is the only obstruction to weak approximation on X?
\end{question}
In the present paper, we investigate the implications of a positive answer to Question \ref{jlct_kummer} for ranks of twists of abelian varieties over $k$. In this paper, we will prove:

\begin{theorem}[see Corollary \ref{back_to_intro_result}]
\label{intro_result}
Let $k$ be a number field. Let $A$ be a positive-dimensional abelian variety over $k$ and let $r>0$ be an integer. Assume that the Brauer--Manin obstruction to weak approximation on $X$ is the only one, where $X$ is some smooth projective model of $\Km(A^r)$. Then $A$ has infinitely many quadratic twists of rank at least $r$.
\end{theorem}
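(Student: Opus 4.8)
The plan is to use that $\Km(A^r)$ simultaneously ``sees'' all quadratic twists of $A$, and to manufacture the required points by weak approximation. Let $\pi\colon A^r\to\Km(A^r)$ be the quotient map, put $V=A^r\setminus A^r[2]$ and $U=\pi(V)$, so that $\pi$ restricts to an \'etale double cover $V\to U$ (a nontrivial $\twogroup$-torsor, since $V$ is geometrically connected), and $U$ lies in the smooth locus of $\Km(A^r)$; after shrinking we view $U$ as an open subvariety of the chosen smooth model $X$. A point $x\in U(k)$ has a \emph{type} $c(x)\in k^{\times}/(k^{\times})^2$, namely the class of the quadratic \'etale algebra of the fibre $\pi^{-1}(x)$, and the two $\ol{k}$-points of that fibre form a Galois-stable pair $\{Q,-Q\}$ with $Q\in(A^r)^{c(x)}(k)$; here $(A^r)^{c}\cong(A^{c})^{r}$ is the twist of $A^r$ by $c$ acting through $[-1]$, so $Q=(Q_1,\dots,Q_r)$ with $Q_i\in A^{c(x)}(k)$. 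Hence it suffices to produce a point $x\in U(k)$ whose lift $(Q_1,\dots,Q_r)$ is $\Z$-linearly independent, for then $\rk A^{c(x)}(k)\ge r$; and then to arrange that infinitely many types $c(x)$ arise in this way.

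For the construction, fix a finite set $S$ of finite places of good reduction for $A$ and, for each $v\in S$, a point $\tilde x_v\in U(k_v)$ in the image of $V(k_v)$ together with a chosen lift $\tilde Q_v=(\tilde Q_{v,1},\dots,\tilde Q_{v,r})\in A(k_v)^r$; since the locus of $U(k_v)$ where the fibre is split is open, $c(x)$ is a square in $k_v^{\times}$ for every $x$ near $\tilde x_v$. Let $\Omega_0\subseteq X(\A_k)$ be the nonempty open set of adelic points lying in $U$ and close to $\tilde x_v$ at each $v\in S$. Assuming the Brauer--Manin obstruction to weak approximation on $X$ is the only one, $X(k)$ is dense in $X(\A_k)^{\Br}$, so once we know $\Omega_0\cap X(\A_k)^{\Br}\ne\emptyset$ we obtain $x\in X(k)\cap U$ whose lift $Q$ maps, by continuity of the fibre of $\pi$, at each $v\in S$ to a point of $A^{c(x)}(k_v)^r\cong A(k_v)^r$ close (up to a global sign) to $\tilde Q_v$. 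The point is then to choose the $\tilde Q_v$ so as to force $Q_1,\dots,Q_r$ to be independent in $A^{c(x)}(k)$.

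The first point needing work is that $\Omega_0$ meets $X(\A_k)^{\Br}$. I would use that $\Br(X)$ injects into $\Br(U)$ and that $\pi^{*}$ sends $\Br(U)$ into $\Br(V)=\Br(A^r)$, the last equality by purity once $\dim A^r\ge 2$ (when $\dim A^r=1$, $\Km(A)=\p^1$ and there is no obstruction at all, a classical and elementary case). Thus for $\mathbf P\in V(\A_k)$ the pairing $\langle\pi(\mathbf P),\beta\rangle_X$ reduces to pairing $\mathbf P$ with a class of $\Br(A^r)$. Since $0\in A^r(k)$ is orthogonal to all of $\Br(A^r)$, one can perturb the constant adelic point $\mathbf 0$ so as to have the prescribed components at the places of $S$ while staying orthogonal to the finitely many (modulo $\Br k$) relevant classes, absorbing the defect at one further auxiliary place by the reciprocity law; its image under $\pi$ then lies in $\Omega_0\cap X(\A_k)^{\Br}$. (That $X(\A_k)^{\Br}$ is nonempty to begin with is clear, since $\Km(A^r)$ has the rational point $\pi(0)$, whose quotient singularity admits a resolution carrying rational points.)

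The second point, which I expect to be the main obstacle, is the choice of the $\tilde Q_v$ forcing $\Z$-linear independence of the lift, uniformly in the a priori unknown twist $c(x)$ and its torsion. Here I would use the $p$-adic logarithms $\log_v\colon A(k_v)\to k_v^{\dim A}$ for $v\in S$: any relation $\sum_i n_iQ_i\in A^{c(x)}(k)_{\tors}$ pushes forward to $\sum_i n_i\log_v(Q_i)=0$ in $k_v^{\dim A}$ for every $v\in S$, and one must choose $S$ and the $\tilde Q_v$ so that the resulting open conditions on the $\log_v(Q_i)$ admit only the trivial relation. When $r\le\dim(A)\cdot[k_v:\Q_p]$ for a single place $v$, one simply takes $\tilde Q_v$ with $\log_v$-images $\Q_p$-linearly independent in $k_v^{\dim A}$, an open and hence inherited condition which forces $n=0$. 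In general the $r$ coordinates must be split across several places, and arranging that the local conditions genuinely exclude \emph{all} global relations, not merely small ones — for instance by prescribing unbalanced valuations for the $\tilde Q_{v,i}$ in the formal groups at the places of $S$ — is the technical heart of the matter. Granting this, each such $x$ yields a twist of rank $\ge r$; and infinitely many classes occur, for if $\{c:\rk A^c(k)\ge r\}$ were finite we could adjoin a place $w$ split in the multiquadratic extension it generates together with a point $\tilde x_w\in U(k_w)$ with non-split fibre, rerun the construction with $S\cup\{w\}$, and obtain $x$ with $\rk A^{c(x)}(k)\ge r$ yet $c(x)$ a nonsquare at $w$, a contradiction. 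This proves the theorem.
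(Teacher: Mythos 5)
Your overall strategy (read quadratic twists off the fibres of $A^r\setminus A^r[2]\to\Km_0(A^r)$ and use the weak-approximation hypothesis to force a lift of large rank) is the same as the paper's, but the two steps you flag as delicate are exactly where the argument is incomplete, and the paper resolves them by different means. The central gap is the independence-forcing step, which you explicitly defer (``Granting this\ldots''). Your $p$-adic logarithm scheme works only when $r\le\dim(A)\cdot[k_v:\Q_p]$ for a single place, a bound that cannot be met for large $r$ since $[k_v:\Q_p]\le[k:\Q]$ is fixed; once you split the conditions over several places, ``no nonzero \emph{integer} relation'' is a countable intersection of open conditions, not a single open condition, so it does not transfer from the approximating local tuples $\tilde Q_v$ to the actual global lift $Q$, and your ``unbalanced valuations'' remark is not an argument. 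The paper avoids this entirely: it does not construct one good rational point, but argues by contradiction with density. Using Silverman's theorem that only finitely many twists have torsion beyond $2$-torsion, and a \v{C}ebotarev choice of places $L_1,\dots,L_r$ at which $A$ acquires a $p$-torsion point and the finitely many ``bad'' twists have $A^c(k)\subset pA^c(L_i)$, it builds reduction maps $A(L_i)\to\F_p$ and shows that if every relevant twist had rank $<r$, the image of all rational points of $\Km_0(A^r)$ in the finite discrete set $\F_p^{r\times r}/\twogroup(L)$ would land in the proper subset of (orbits of) singular matrices, contradicting density (Theorem \ref{criterion_rank_r_twist}, Lemmas \ref{the_counting_argument} and \ref{linear_dependencies}).

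The second gap is your claim that $\Omega_0$ meets $X(\A_k)^{\Br}$. You quietly assume there are only ``finitely many (modulo $\Br k$) relevant classes'', which is precisely the nontrivial finiteness of $\Br(X)/\Br_0(X)$; the paper proves this (Proposition \ref{brfinite}) using simple connectedness of Kummer varieties and the Skorobogatov--Zarhin finiteness of $\Br(B_{\ol k})^{\Gamma}$, and without it the Brauer--Manin set need not be open, so no perturbation argument can start. Moreover, ``absorbing the defect at one further auxiliary place by reciprocity'' presumes the local evaluation map at that place takes the needed value, which is unjustified (evaluation maps can be constant). The paper's logical order also differs from yours in an essential way: openness and nonemptiness of $X(\A_k)^{\Br}$ (Proposition \ref{brauermaninmain}) show that the Brauer conditions constrain components only at some finite set $T$ of places, and \emph{then} the working places $L_1,\dots,L_r$ are chosen outside $T$ by \v{C}ebotarev; you instead fix your set $S$ of places first and then need to control the Brauer pairing over $S$, which does not follow from the hypothesis. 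Your concluding ``infinitely many twists'' argument inherits the same difficulty, whereas the paper gets infinitude for free by excluding an arbitrary finite set $S\subset\HH^1(k,\twogroup)$ inside Theorem \ref{criterion_rank_r_twist}.
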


In fact, the conclusion of the theorem holds under the weaker assumption that there exists a density-zero set $T$ of places of $k$ such that $X(k)$ is dense in $\prod_{v \notin T} X(k_v)$ (see Remark \ref{density_zero_remark} and Theorem \ref{main}). Hence, if the extension of Conjecture \ref{jlct} to smooth projective models of Kummer varieties of the form $\Km(A^r)$ were to hold true, then ranks of quadratic twists of any given positive-dimensional abelian variety are unbounded.




\section{Rational points on a Kummer variety}

In this section, we let $k$ be a field of which the characteristic is different from $2$. For any abelian variety $B$ over $k$, we write $B_0$ for the complement in $B$ of its $2$-torsion subscheme. We further denote by $\Km_0(B)$ the quotient of $B_0$ by its natural $\twogroup$-action. Since the $\twogroup$-action on $B_0$ has no fixed points, it endows $B_0$ with the structure of $\twogroup$-torsor over $\Km_0(B)$. 

Throughout this section, we fix an abelian variety $B$. Let $\wh{X}$ denote $\Km(B)$ and let $q \colon B \rightarrow \wh{X}$ denote the quotient map.  

\subsection{Quadratic twists}
\label{quad_twists}
We first recall some standard facts about quadratic twists of abelian varieties.

Let $L$ be a field extension of $k$ with separable closure $\ol{L}$, and let $\Gamma_L = \Gal(\ol{L}/L)$ be the absolute Galois group of $L$. Let $X$ be a scheme over $L$. Recall that a scheme $X'$ over $L$ is said to be a \emph{twist} of $X$ if there exists an isomorphism $\phi \colon X_{\ol{L}} \stackrel{\sim}{\rightarrow} X'_{\ol{L}}$, where the subscripts denote base-change. If $X'$ is a twist of $X$, and $\phi \colon X_{\ol{L}} \stackrel{\sim}{\rightarrow} X'_{\ol{L}}$ is an isomorphism, then we may associate with the pair $(X',\phi)$ the cocycle $c\colon\Gamma_L \rightarrow \Aut(X_{\ol{L}})$ that maps $\sigma \in \Gamma_L$ to the automorphism ${}^{\sigma}(\phi^{-1}) \circ \phi$. One says that $X'$ is the twist of $X$ \emph{by the cocycle $c$}. This defines an injective map from the set of twists of $X$ up to isomorphism to $\HH^1(L,\Aut(X_{\ol{L}}))$. If $X$ is e.g.~a quasi-projective variety, then this map is a bijection. If this is the case, then there is a natural way to associate with a cocycle $c\colon \Gamma_L\rightarrow\Aut(X_{\ol{L}})$ a pair $(X',\phi)$ as above  (for the last statements, see \cite[III.1.3]{serre_galois}).

Let $c$ be an element of $\HH^1(L,\twogroup)$. Since the $\Gamma_L$-action on $\twogroup$ is trivial, we have $\HH^1(L,\twogroup)=\Hom(\Gamma_L,\twogroup)=L^{\ast}/L^{\ast 2}$, so we may interpret the elements of $\HH^1(L,\twogroup)$ as cocycles $\Gamma_L \rightarrow \twogroup$. Then there exists a scheme $B^c$ over $L$ and an isomorphism $\phi \colon B_{\ol{L}} \stackrel{\sim}{\rightarrow} B^c_{\ol{L}}$ giving rise to the cocycle $c$. Since $B_{\ol{L}}$ is an abelian variety, we can use $\phi$ to endow $B^c_{\ol{L}}$ with the structure of an abelian variety over $\ol{L}$; it is easy to see that this structure descends to $L$. Hence $B^c$ is an abelian variety over $L$ that is the twist of $B_L$ by the cocycle $c$; we call $B^c$ the \emph{quadratic twist} of $B_L$ by the cocycle $c$.

\subsection{Quadratic twists and Kummer varieties}

Again, we let $L$ be any field extension of $k$, with separable closure $\ol{L}$. For all elements $c$ in $\HH^1(L,\twogroup)$, we consider the pair $(B^c,\phi^c)$, where $B^c$ is the quadratic twist of $B_L$ corresponding to $c$, and $\phi^c \colon B_{\ol{L}} \stackrel{\sim}{\rightarrow} B^c_{\ol{L}}$ the corresponding isomorphism. We then define a map 
\begin{equation*}
q^c \colon B^c_{\ol{L}} \rightarrow \wh{X}_{\ol{L}}
\end{equation*}
by setting $q^c = q \circ (\phi^c)^{-1}$, where we have re-used the letter $q$ to denote the base-change to $\ol{L}$ of the quotient map $q \colon B \rightarrow \wh{X}$ defined above. It is easily verified that $q^c$ is defined over $L$; hence we obtain a morphism
\begin{equation*}
q^c \colon B^c \rightarrow \wh{X}_{L}.
\end{equation*}
Moreover, since $\phi^c$ is an isomorphism of abelian varieties, the morphism $q^c$ is again the quotient map for the $\twogroup$-action on $B^c$.

\begin{proposition}\label{commutative}
Let $c \in \HH^1(k,\twogroup)$ and $d \in \HH^1(L,\twogroup)$ be such that the restriction of $c$ to $\HH^1(L,\twogroup)$ equals $d$. Write $B^c_L$ for the base-change of $B^c$ to $L$ and $B^d_L$ for the twist of $B_L$ by the cocycle $d$. Then there exists an isomorphism
$$
F_{c,d} \colon B^c_L \stackrel{\sim}{\rightarrow} B^{d}_L
$$
such that the following diagram is commutative
\[
\xymatrix{
B^c_L \ar[dr]_{q^c} \ar[rr]^{F_{c,d}}_{\cong} & & B^{d}_L \ar[dl]^{q^{d}}  \\\
& \wh{X}_L &  
}
\]
where $\wh{X}_L$ is the base-change of $\wh{X}$ to $L$.
\end{proposition}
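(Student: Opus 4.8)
The plan is to construct $F_{c,d}$ directly from the isomorphisms $\phi^c$ and $\phi^d$ over $\ol{L}$ and then check that it descends to $L$ using the compatibility of the cocycles. Concretely, I would set
\[
F_{c,d} = \phi^d \circ (\phi^c)^{-1} \colon B^c_{\ol{L}} \xrightarrow{\ \sim\ } B^d_{\ol{L}},
\]
which is an isomorphism of abelian varieties over $\ol{L}$ since both $\phi^c$ and $\phi^d$ are. The commutativity of the triangle over $\ol{L}$ is then immediate and purely formal: by definition $q^d \circ \phi^d = q = q^c \circ \phi^c$ (both equal the fixed quotient map $q \colon B \to \wh{X}$ base-changed to $\ol{L}$), so $q^d \circ F_{c,d} = q^d \circ \phi^d \circ (\phi^c)^{-1} = q \circ (\phi^c)^{-1} = q^c$. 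Since $q^d$ is a quotient map (in particular an epimorphism) and the diagram commutes over $\ol{L}$, once we know $F_{c,d}$ is defined over $L$ the corresponding diagram over $L$ commutes as well.

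The substantive step is Galois descent: I must show $F_{c,d}$ is $\Gamma_L$-equivariant, i.e. ${}^{\sigma}F_{c,d} = F_{c,d}$ for all $\sigma \in \Gamma_L$. Here I would use that the cocycle of $(B^c_L,\phi^c)$ is the restriction $c|_{\Gamma_L} = d$, which is by hypothesis the same as the cocycle of $(B^d_L,\phi^d)$. Writing $c_\sigma = {}^{\sigma}((\phi^c)^{-1}) \circ \phi^c$ and $d_\sigma = {}^{\sigma}((\phi^d)^{-1}) \circ \phi^d$ for the associated cocycles with values in $\Aut(B_{\ol{L}})$, the assumption gives $c_\sigma = d_\sigma$ for all $\sigma \in \Gamma_L$. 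Then one computes
\[
{}^{\sigma}F_{c,d} = {}^{\sigma}\phi^d \circ {}^{\sigma}(\phi^c)^{-1} = {}^{\sigma}\phi^d \circ \bigl( c_\sigma \circ (\phi^c)^{-1} \bigr) = \bigl( {}^{\sigma}\phi^d \circ d_\sigma \bigr) \circ (\phi^c)^{-1} = \phi^d \circ (\phi^c)^{-1} = F_{c,d},
\]
using $c_\sigma = d_\sigma$ in the middle and ${}^{\sigma}\phi^d \circ d_\sigma = {}^{\sigma}\phi^d \circ {}^{\sigma}(\phi^d)^{-1} \circ \phi^d = \phi^d$ at the end. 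Hence $F_{c,d}$ descends to an isomorphism of $L$-schemes $B^c_L \to B^d_L$, and since source and target carry their abelian variety structures transported from $B_{\ol{L}}$ via $\phi^c$ and $\phi^d$ respectively, $F_{c,d}$ is an isomorphism of abelian varieties over $L$.

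I expect the only real subtlety to be bookkeeping with the Galois action on morphisms and making sure the cocycle conventions match those in Section~\ref{quad_twists} (in particular the direction of the twisting cocycle $\sigma \mapsto {}^{\sigma}(\phi^{-1}) \circ \phi$, and the fact that restricting the cocycle of $(B^c,\phi^c)$ from $\Gamma_k$ to $\Gamma_L$ genuinely yields the cocycle of $(B^c_L,\phi^c)$ — this is the compatibility of twisting with base change and follows from $(X')_L$ being the twist of $X_L$ by the restricted cocycle, cf.~\cite[III.1.3]{serre_galois}). Once that is in place, the argument is the formal descent computation above; no geometric input beyond the fact that $\phi^c,\phi^d$ are isomorphisms of abelian varieties is needed.
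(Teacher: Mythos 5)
Your proposal is correct and follows essentially the same route as the paper: define $F_{c,d}=\phi^d\circ(\phi^c)^{-1}$, note it is defined over $L$ because $\res_{L/k}(c)=d$, and deduce commutativity from $q^c=q\circ(\phi^c)^{-1}$ and $q^d=q\circ(\phi^d)^{-1}$. The only difference is that you spell out the Galois-descent cocycle computation that the paper states in one line, which is a harmless (and correct) elaboration.
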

\begin{proof}
Let $\phi^c \colon B_{\ol{k}} \stackrel{\sim}{\rightarrow} B^c_{\ol{k}}$ and $\phi^d \colon B_{\ol{L}} \stackrel{\sim}{\rightarrow} B^d_{\ol{L}}$ be the isomorphisms corresponding to the cocycles $c$ and $d$, as chosen above. Let $F_{c,d}$ be the $\ol{L}$-isomorphism
$$
\phi^d \circ (\phi^c)^{-1}\colon B^c_{\ol{L}} \stackrel{\sim}{\rightarrow} B^d_{\ol{L}}.
$$
Since $\res_{L/k}(c)=d$, we have that $F_{c,d}$ is defined over $L$. Finally, we have
$$
q_c = q \circ (\phi_c)^{-1} = q \circ \phi_d^{-1} \circ F_{c,d} = q_d \circ F_{c,d},
$$
hence the diagram is commutative. 
\end{proof}

\subsection{A partition of rational points on a Kummer variety}
We write $X_0$ for $\Km_0(B)$. For each $c$ in $\HH^1(L,\twogroup)$, we write $q^c \colon B^c_0 \rightarrow X_0$ for the restriction of $q^c \colon B^c \rightarrow \wh{X}$. As observed above, the $\twogroup$-action on $B_0$ endows it with the structure of a $\twogroup$-torsor over $X_0$.

\begin{lemma}
\label{fundamental}
Let the notation be as before in this section. The morphisms $q^c \colon  B_0^c \rightarrow X_0$ induce a bijection
\begin{equation}
\label{star}
\coprod_{c \in \HH^1(L,\twogroup)} B_0^c(L)/\twogroup(L) \stackrel{\sim}{\rightarrow} X_0(L).
\end{equation}
Assume that $k$ is a number field and that $L$ is a completion of $k$. Then $\HH^1(L,\twogroup)$ is a finite group. If we endow the sets $B_0^c(L)$ and $X_0(L)$ with the topologies coming from the one on $L$, and the sets $B^c_0(L)/\twogroup(L)$ with the quotient topology, then the bijection \eqref{star} is in fact a homeomorphism. 
\end{lemma}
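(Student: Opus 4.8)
The plan is to attach to each $x\in X_0(L)$ the class in $\HH^1(L,\twogroup)$ of the fibre of $q$ over $x$, and to show that the resulting partition of $X_0(L)$ is exactly the one cut out by the maps $q^c$. Concretely: given $x\in X_0(L)$, choose $b\in B_0(\ol L)$ with $q(b)=x$. Since $x$ is not the image of a $2$-torsion point, $b\neq -b$, so $q^{-1}(x)(\ol L)=\{b,-b\}$, a $\Gamma_L$-stable set of size two. Then $\sigma\mapsto c_x(\sigma)$, defined by $\sigma(b)=c_x(\sigma)\cdot b$ with $c_x(\sigma)\in\twogroup(\ol L)=\{\pm 1\}$, is a continuous homomorphism $\Gamma_L\to\twogroup$ (here one uses that $c_x(\sigma)\in\twogroup(L)$ is Galois-fixed and that $\sigma$ is a group automorphism of $B(\ol L)$); because $\Gamma_L$ acts trivially on $\twogroup$ there are no coboundaries, so $\HH^1(L,\twogroup)=\Hom(\Gamma_L,\twogroup)$ and $c_x$ is a well-defined class, unchanged if $b$ is replaced by $-b$.

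Next I would recall from the description of $B^c$ in Section~\ref{quad_twists} that $\phi^c$ identifies $B^c_{\ol L}$ with $B_{\ol L}$ in a way that carries $q^c$ to $q$ and twists the Galois action by $c$; tracking this through gives
\[
B^c_0(L)=\{\,b'\in B_0(\ol L):\ \sigma(b')=c(\sigma)\cdot b'\ \text{for all }\sigma\in\Gamma_L\,\},\qquad q^c(b')=q(b'),
\]
where $c(\sigma)=c(\sigma)^{-1}$ makes the sign immaterial. Comparing with the definition of $c_x$, one sees that $b$ (equivalently $-b$) lies in $B^c_0(L)$ precisely when $c_x=c$; hence $x\in q^c(B^c_0(L))$ iff $c_x=c$, so the sets $q^c(B^c_0(L))$ partition $X_0(L)$ as $c$ runs over $\HH^1(L,\twogroup)$. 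Finally, if $b'\in B^c_0(L)$ then $-b'\in B^c_0(L)$ as well and $b'\neq -b'$, so every nonempty fibre of $q^c\colon B^c_0(L)\to X_0(L)$ is a single $\twogroup(L)$-orbit; thus $q^c$ induces a bijection $B^c_0(L)/\twogroup(L)\xrightarrow{\sim}q^c(B^c_0(L))$, and assembling over $c$ shows that \eqref{star} is a bijection.

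For the final assertion, take $k$ a number field and $L=k_v$. Then $\HH^1(L,\twogroup)=L^\ast/L^{\ast 2}$ is finite, since a local field has only finitely many quadratic extensions (and for $L=\R$ or $\C$ this is immediate). Each $q^c$ is a morphism of $L$-varieties, hence induces a continuous, $\twogroup(L)$-invariant map on $L$-points, so \eqref{star} is continuous. Since $\operatorname{char}L\neq 2$, the morphism $q^c\colon B^c_0\to X_0$ is a torsor under the étale group $\twogroup$, in particular finite étale; therefore $q^c\colon B^c_0(L)\to X_0(L)$ is a local homeomorphism onto an open subset of $X_0(L)$ (a standard consequence of the implicit function theorem over local fields), and in particular $q^c(B^c_0(L))$ is open. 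The quotient map $B^c_0(L)\to B^c_0(L)/\twogroup(L)$ is open and $q^c$ factors through it, so a short diagram chase shows that the induced continuous bijection $B^c_0(L)/\twogroup(L)\to q^c(B^c_0(L))$ is also open, hence a homeomorphism onto the open set $q^c(B^c_0(L))$. As $\HH^1(L,\twogroup)$ is finite and the sets $q^c(B^c_0(L))$ partition $X_0(L)$, each of them is clopen, and a finite disjoint union mapping homeomorphically onto a clopen cover of the target is a homeomorphism; thus \eqref{star} is a homeomorphism.

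The main thing to get right is the identification of $B^c_0(L)$ with $\{b'\in B_0(\ol L):\sigma b'=c(\sigma)b'\}$ compatibly with $q^c=q\circ(\phi^c)^{-1}$, i.e.\ keeping careful track of the twisted Galois action; the remainder of the set-theoretic argument is bookkeeping, and on the topological side the only non-elementary input is that finite étale morphisms induce local homeomorphisms on $k_v$-points, which is classical. (Alternatively, the whole statement can be phrased via torsors: $B^c_0\to X_0$ is the twist of the $\twogroup$-torsor $B_0\to X_0$ by $c$, the fibre of $q^c$ over $x\in X_0(L)$ has class $c_x+c$ in $\HH^1(L,\twogroup)$, and one invokes the standard description of the $L$-points of a torsor and of its twists.)
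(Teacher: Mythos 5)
Your proof is correct and follows essentially the same route as the paper: the paper classifies the fibres of $q$ over $L$-points as $\twogroup$-torsors (citing the standard descent formalism) and notes that the twisted fibre $(q^c)^{-1}(P)$ has class $cc'$, which is exactly your explicit cocycle $c_x$ computation and twisted-Galois-action description of $B^c_0(L)$; the topological part (continuity from morphisms of varieties, openness of $q^c$ on $L$-points via \'etaleness and the implicit function theorem, finiteness of $\HH^1(L,\twogroup)$) is likewise the paper's argument, with your clopen-partition assembly being only a cosmetic repackaging.
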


\begin{proof}
The first part of the lemma is a consequence of eq. (2.12) of \cite{torsors} and the discussion leading up to it. We give a sketch of the idea for the sake of completeness. Let $P \in X_0(L)$ be arbitrary. The fibre $q^{-1}(P)$ over $P$ is a $\twogroup$-torsor over $\Spec(L)$. Such torsors are classified by the Galois cohomology group $\HH^1(L,\twogroup)$; hence $q^{-1}(P)$ determines an element $c' \in \HH^1(L,\twogroup)$. For an arbitrary element $c \in \HH^1(L,\twogroup)$, it is then not hard to see that the fibre $(q^c)^{-1}(P)$ gives a $\twogroup$-torsor corresponding to the element $cc'$ of $\HH^1(L,\twogroup)$. Since a torsor over $\Spec(L)$ has an $L$-point if and only if it corresponds to the identity of $\HH^1(L,\twogroup)$, we see that $P \in q^c(B^c_0(L))$ if and only if $cc'$ is the identity of $\HH^1(L,\twogroup)$.

For the second part, we assume that $k$ is a number field and that $L$ is a completion of $k$. The finiteness of $\HH^1(L,\twogroup)$ is then a well-known fact. Observe that the maps $B^c_0(L) \rightarrow X_0(L)$ induced by the $q^c$ are continuous, since they are induced by morphisms between projective $L$-schemes; by the definition of the quotient topology, the maps $B^c_0(L)/\twogroup(L) \rightarrow X_0(L)$ are likewise continuous. This shows that \eqref{star} is a continuous bijection. To prove our claim it suffices to show that the map \eqref{star} is open; for this, it suffices to show that each of the $B^c_0(L)/\twogroup(L) \rightarrow X_0(L)$ is open. Let $V \subset B^c_0(L)/\twogroup(L)$ be an open subset and let $U$ be its image in $X_0(L)$. The preimage $V'$ of $V$ in $B^c_0(L)$ is open by the continuity of the quotient map $B^c_0(L) \rightarrow B^c_0(L)/\twogroup(L)$, and we have $U = q^c(V')$. Since $q^c$ is an \'etale morphism, it is open by the implicit function theorem for $L$. Therefore $U$ is open, which establishes the claim.
\end{proof}

\begin{remark}\label{product_topologies}
\upshape
Assume that $L=\prod_{i=1}^n L_i$ is a finite product of field extensions $L_i$ of $k$. Then \eqref{star} still holds. Indeed, the set $X_0(L)$ is canonically isomorphic to $\prod_{i=1}^n X_0(L_i)$, and the set $B_0^c(L)/\twogroup(L)$ is canonically isomorphic to the product $\prod_{i=1}^n B_0^c(L_i)/\twogroup(L_i)$. If moreover $k$ is a number field and the $L_i$ are completions of $k$, and we endow the various sets with their natural product topologies, then \eqref{star} is again a homeomorphism.
\end{remark}

\subsection{A useful diagram}
\label{varying}

We continue the notation from the previous parts of this section. In the diagram of Proposition \ref{commutative_diagram}, we will write $\HH^1(k)$ for $\HH^1(k,\twogroup)$ and $\HH^1(L)$ for $\HH^1(L,\twogroup)$. 
\begin{proposition}
\label{commutative_diagram}
There exist maps $f_{L/k}$, $\wt{f}_{L/k}$, and $F_{L/k}$ rendering the following diagram commutative:
\begin{equation*}
\xymatrix{
X_0(k)  \ar[r] & X_0(L)  \\\
  \coprod\limits_{c \in \HH^1(k)} B_0^c(k)/\twogroup(k) \ar@{^(->}[d] \ar[u]_{\cong} \ar[r]^{f_{L/k}} & \coprod\limits_{d \in \HH^1(L)} B_0^d(L)/\twogroup(L) \ar[u]_{\cong} \ar@{^(->}[d]   \\\ 
\coprod\limits_{c \in \HH^1(k)} B^c(k)/\twogroup(k) \ar[r]^{\wt{f}_{L/k}} & \coprod\limits_{d \in \HH^1(L)} B^d(L)/\twogroup(L)   \\\
\coprod\limits_{c \in \HH^1(k)} B^c(k) \ar@{->>}[u] \ar[r]^{F_{L/k}}         & \coprod\limits_{d \in \HH^1(L)} B^d(L)  \ar@{->>}[u]  
}
\end{equation*}
Here, the maps labeled $\cong$ are the ones induced by Lemma \ref{fundamental}, and the unlabeled maps are the natural inclusion and quotient maps.
\end{proposition}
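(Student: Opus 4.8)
The plan is to construct the bottommost horizontal map $F_{L/k}$ directly out of Proposition \ref{commutative}, applied summand by summand, and then to obtain $\wt{f}_{L/k}$ and $f_{L/k}$ from it by passing to $\twogroup$-quotients and restricting to the $2$-torsion-free loci; the three middle and upper squares should then commute essentially by naturality, so that the only genuine input is Proposition \ref{commutative} together with the description of the $\cong$-maps from Lemma \ref{fundamental}.

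First I would fix once and for all the isomorphisms $\phi^c \colon B_{\ol{k}} \stackrel{\sim}{\rightarrow} B^c_{\ol{k}}$ (for $c$ over $k$) and $\phi^d$ (for $d$ over $L$) used in Section \ref{quad_twists}, so that the morphisms $q^c$, $q^d$ are literally the ones already defined. For each $c \in \HH^1(k)$ put $d = \res_{L/k}(c) \in \HH^1(L)$; Proposition \ref{commutative} then furnishes an $L$-isomorphism $F_{c,d} \colon B^c_L \stackrel{\sim}{\rightarrow} B^d_L$ with $q^d \circ F_{c,d} = q^c_L$. I would define $F_{L/k}$ on the summand indexed by $c$ to be the composite $B^c(k) \to B^c_L(L) \stackrel{F_{c,d}}{\longrightarrow} B^d_L(L) = B^d(L)$, where the first arrow is restriction of points along $k \hookrightarrow L$; thus $F_{L/k}$ carries the $c$-summand into the $\res_{L/k}(c)$-summand. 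Here I would note that $\res_{L/k}$ on $\HH^1(-,\twogroup)$ is in general neither injective nor surjective, so several summands may map into one and some may be missed, but this is harmless for the construction.

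Next I would descend and restrict. Each $F_{c,d} = \phi^d \circ (\phi^c)^{-1}$ is a composite of isomorphisms of abelian varieties, hence an isomorphism of abelian varieties; in particular it commutes with $[-1]$ and so is $\twogroup$-equivariant. Consequently $F_{L/k}$ intertwines the $\twogroup(k)$-action on $\coprod_c B^c(k)$ with the $\twogroup(L)$-action on $\coprod_d B^d(L)$ (via $\twogroup(k) \hookrightarrow \twogroup(L)$), so it descends to a map $\wt{f}_{L/k}$ on the quotients and the bottom square commutes. Since a homomorphism of abelian varieties carries the $2$-torsion subscheme isomorphically onto the $2$-torsion subscheme, $F_{c,d}$ restricts to an isomorphism of the complements $B^c_{0,L} \stackrel{\sim}{\rightarrow} B^d_{0,L}$, so $\wt{f}_{L/k}$ restricts to the desired $f_{L/k}$; the middle square commutes because its vertical maps are the tautological inclusions. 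For the top square I would unwind the $\cong$-maps of Lemma \ref{fundamental}, which send the class of $P \in B^c_0(k)$ to $q^c(P) \in X_0(k)$ (and similarly over $L$), and combine the identity $q^c_L = q^d \circ F_{c,d}$ of Proposition \ref{commutative} with the functoriality of points, namely $q^c_L(P_L) = \res_{L/k}(q^c(P))$, to obtain $q^d(F_{L/k}(P)) = \res_{L/k}(q^c(P))$, which is exactly commutativity of the top square.

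I expect the only real difficulty to be bookkeeping rather than mathematics: keeping the several coproduct index sets straight, checking carefully that it is $\res_{L/k}$ on $\HH^1(-,\twogroup)$ that governs into which summand $F_{L/k}$ lands, and making sure that the chosen isomorphisms $\phi^c$ are exactly those used to define the $q^c$ and invoked in Proposition \ref{commutative}, so that all the compatibilities genuinely line up. No input beyond Proposition \ref{commutative} and Lemma \ref{fundamental} should be required.
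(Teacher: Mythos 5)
Your proposal is correct and follows essentially the same route as the paper: define $F_{L/k}$ summand by summand via the isomorphisms $F_{c,d}$ of Proposition \ref{commutative} with $d=\res_{L/k}(c)$, descend to $\twogroup$-quotients to get $\wt{f}_{L/k}$, restrict to the $2$-torsion-free locus to get $f_{L/k}$, and deduce commutativity of the top square from $q^d\circ F_{c,d}=q^c$ together with the description of the $\cong$-maps in Lemma \ref{fundamental}. Your added remarks on $\twogroup$-equivariance and preservation of $2$-torsion merely make explicit checks the paper leaves implicit.
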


\begin{proof}
It suffices to define $F_{L/k}$, and check that it induces maps $\wt{f}_{L/k}$ and $f_{L/k}$ that make the diagram commutative. Let $F_{L/k}$ be the map
$$
F_{L/k} \colon \coprod_{c \in \HH^1(k,\twogroup)} B^c(k) \rightarrow \coprod_{d \in \HH^1(L,\twogroup)} B^d(L)
$$
that when restricted to $B^c(k)$ equals the map
$$
F_{c,d} \colon B^c(k) \rightarrow B^d(L),
$$
where $F_{c,d}$ is as defined in the proof of Proposition \ref{commutative}, and where $d \in \HH^1(L,\twogroup)$ equals the restriction $\res_{L/k}(c)$ of the cocycle $c$ to $L$. By passage to the quotient, the map $F_{L/k}$ induces a map $\wt{f}_{L/k}$ that makes the lower square of the diagram commutative. Likewise, if we let $f_{L/k}$ be the restriction of $\wt{f}_{L/k}$, it is clear that the middle square is commutative. Finally, it follows from Proposition \ref{commutative} and the definition of $f_{L/k}$ that the top square is commutative.
\end{proof}

\begin{remark}\upshape
It follows from Lemma \ref{fundamental} that if $L = \prod_{i=1}^r L_i$ is a finite product of field extensions of $k$, then Proposition \ref{commutative_diagram} carries over word for word. Now assume additionally that $k$ is a number field, and the $L_i$ are completions of $k$. Then the maps appearing in the second column of the diagram are continuous, where the sets are endowed with their obvious topologies. By Lemma \ref{fundamental} and Remark \ref{product_topologies}, the top-right vertical map is a homeomorphism. Furthermore, since the complement of each $B^c_0(L_i)$ in $B^c(L_i)$ is a finite set, it is easy to see that the downward facing map in the second column is the inclusion of an open subset.
\end{remark}


\section{A criterion for the existence of high-rank twists}

We fix a number field $k$. In this section, we prove a criterion for an abelian variety $A$ to possess a quadratic twist with rank at least $r$, in terms of density properties of rational points on the Kummer variety of the $r$-fold product $A^r$. 


\begin{theorem}
\label{criterion_rank_r_twist}
Let $A$ be an abelian variety over the number field $k$. Let $r>0$ be an integer and denote $B = A^r$. Let $p$ be a prime. Let $S \subset \HH^1(k,\twogroup)$ be a finite subset. Assume $L_1,\ldots,L_r$ are pairwise non-isomorphic non-archimedean completions of $k$, all of good reduction for $A$, such that
\begin{itemize}
\item[(i)] each group $A(L_i)$ has an element $P_i$ of order $p$;
\item[(ii)] for every quadratic twist $A^c$ of $A$ such that either $c \in S$ or $A^c(k)[p] \neq 0$, we have, for each $i$, that $A^c(k)$ is contained in $pA^c(L_i)$;
\item[(iii)] $\Km_0(B)(k)$ is dense in $\prod_{i=1}^r  \Km_0(B)(L_i)$.
\end{itemize}
Then there exists $c \in \HH^1(k,\twogroup)$ such that $c \notin S$ and the rank of $A^c$ is at least $r$.
\end{theorem}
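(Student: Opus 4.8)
The plan is to apply the density hypothesis (iii) to a carefully chosen target point and to read off a twist of large rank from the resulting rational point. Write $X_0 = \Km_0(B)$ and note that, since the $\twogroup$-action on $B = A^r$ is diagonal, for $c \in \HH^1(k,\twogroup)$ one has $B^c = (A^c)^r$. I focus on the case $p$ odd (for $p = 2$ a small modification, translating by a non-$2$-torsion point, is needed). For each $i$ let $\wt R_i \in A^r(L_i) = B(L_i)$ be the point whose $i$-th coordinate is $P_i$ and all of whose other coordinates are $0$; as $P_i$ has order $p$ this point lies in $B_0(L_i)$, so $R_i := q(\wt R_i) \in X_0(L_i)$ lies in the trivial component $B_0(L_i)/\twogroup(L_i)$ of the partition of Lemma \ref{fundamental}. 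By (iii) there is $Q \in X_0(k)$ inside any prescribed neighbourhood of $(R_1,\dots,R_r)$ in $\prod_i X_0(L_i)$; by Lemma \ref{fundamental}, $Q$ lifts to $\wt Q = (\wt Q_1,\dots,\wt Q_r) \in B_0^c(k) \subseteq (A^c)^r(k)$ for a unique $c \in \HH^1(k,\twogroup)$. I claim that, for a fine enough neighbourhood, this $c$ satisfies $c \notin S$ and $\rk A^c \geq r$.

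First I would localise $c$. Since $\HH^1(L_i,\twogroup)$ is finite, the pieces $B_0^d(L_i)/\twogroup(L_i)$ are clopen in $X_0(L_i)$, so taking the target neighbourhood inside the trivial piece at each $i$ forces $\res_{L_i/k}(c)$ to be trivial. Hence $A^c_{L_i} \cong A_{L_i}$; write $\psi_i \colon A^c(L_i) \xrightarrow{\sim} A(L_i)$ for the induced topological group isomorphism. Because the $\twogroup$-action is diagonal, the transition map $F_{c,\res_{L_i/k}(c)}$ of Proposition \ref{commutative} is $\psi_i^{-1}$ applied coordinatewise, so lifting $Q$ along the étale double cover $B_0(L_i) \to B_0(L_i)/\twogroup(L_i)$ (implicit function theorem over $L_i$, as in Lemma \ref{fundamental}) shows that $(\psi_i\wt Q_1,\dots,\psi_i\wt Q_r)$ is as close as desired to $\pm\wt R_i$. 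Since $pA(L_i)$ is an open subgroup of $A(L_i)$, we may therefore arrange, for every $i$: $\psi_i\wt Q_j \in pA(L_i)$ for all $j \neq i$, and $\psi_i\wt Q_i \in \pm P_i + pA(L_i)$. Equivalently $\wt Q_j \in pA^c(L_i)$ for $j \neq i$, whereas $\wt Q_i \in pA^c(L_i)$ would entail $P_i \in pA(L_i)$.

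The remaining conclusions then follow formally. Using that $P_i$, being of order $p$, lies outside $pA(L_i)$ — a property secured by the choice of the places $L_i$ — hypothesis (ii) rules out both $c \in S$ and $A^c(k)[p] \neq 0$: in either case $\wt Q_i \in A^c(k) \subseteq pA^c(L_i)$, forcing $P_i \in pA(L_i)$, a contradiction. So $c \notin S$ and $A^c(k)[p] = 0$. For the rank, suppose $\sum_j a_j \wt Q_j \in pA^c(k)$ with $a_j \in \Z$; applying $\psi_i$ and using $\wt Q_j \in pA^c(L_i)$ for $j \neq i$ yields $a_i\psi_i\wt Q_i \in pA(L_i)$, hence $a_iP_i \in pA(L_i)$, hence $p \mid a_i$; as $i$ was arbitrary, the classes of $\wt Q_1,\dots,\wt Q_r$ in $A^c(k)/pA^c(k)$ are $\F_p$-linearly independent. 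Since $A^c(k)[p] = 0$, we have $A^c(k)/pA^c(k) \cong (\Z/p\Z)^{\rk A^c}$, and therefore $\rk A^c \geq r$.

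I expect the crux to be the second paragraph: distilling from the single density statement (iii) the whole package of simultaneous local conditions, which forces one to combine carefully (a) the clopen component decomposition (to make $\res_{L_i/k}(c)$ trivial, and hence get the isomorphisms $\psi_i$ at all), (b) the sign ambiguity of the double cover $B_0(L_i) \to X_0(L_i)$, and (c) the identification of the transition maps $F_{c,d}$ with a coordinatewise isomorphism — without (c), the vanishing coordinates of $\wt R_i$ would not control the $\wt Q_j$ modulo $p$. A secondary point that should be pinned down is that the order-$p$ points $P_i$ avoid $pA(L_i)$, which is where the good-reduction hypothesis on the $L_i$ is used.
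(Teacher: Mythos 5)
Your strategy is genuinely different from the paper's (which argues by contradiction, mapping $B(L)$ to the set of $r\times r$ matrices over $\F_p$ via surjections $A(L_i)\to\F_p$ and showing that bounded ranks would force all rational points into the proper, $\twogroup(L)$-stable determinant-zero locus), and most of it is sound: the clopen-component argument forcing $\res_{L_i/k}(c)=1$, the sign-symmetry of your local conditions, and the identification of $F_{c,d}$ as a coordinatewise isomorphism are all correct. However, there is a genuine gap precisely at the step you flagged: the claim that $P_i$, being of order $p$, lies outside $pA(L_i)$ is false in general, and good reduction does not secure it. If the $p$-primary torsion of $A(L_i)$ is cyclic of order $p^2$, every point of order $p$ in $A(L_i)$ is $p$ times a torsion point and hence lies in $pA(L_i)$. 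This situation is allowed by the hypotheses: take $A$ an elliptic curve over $\Q$ with a rational point of order $9$, $p=3$, and $L_i=\Q_7$ a completion of good reduction; then $A(\Q_7)[3^{\infty}]$ injects into the group of points of the reduction modulo $7$, which has at most $13$ elements, so it is cyclic of order $9$, and \emph{every} order-$3$ point of $A(\Q_7)$ lies in $3A(\Q_7)$. In that case your target point $\wt R_i$ imposes no condition modulo $p$, and none of your conclusions (exclusion of $c\in S$, vanishing of $A^c(k)[p]$, independence of the $\wt Q_j$ in $A^c(k)/pA^c(k)$) follows; no different choice of order-$p$ point can help, since all of them are $p$-divisible there.

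The repair is to use hypothesis (i) only through what it really provides, as in the paper's Lemma \ref{surj_homs}: by Mattuck's theorem $A(L_i)$ contains a torsion-free finite-index subgroup $W\cong\mathscr{O}_{L_i}^{\dim A}$, so the image of $P_i$ in the finite group $A(L_i)/W$ has order $p$, whence $A(L_i)\neq pA(L_i)$. Replace the $i$-th coordinate of $\wt R_i$ by any $R_i'\in A(L_i)$ whose class in $A(L_i)/pA(L_i)$ is nontrivial, chosen non-$2$-torsion (possible, since the coset $R_i'+pA(L_i)$ is infinite while the $2$-torsion is finite; for odd $p$ this is automatic, as $2$-torsion points are $p$-divisible). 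With $P_i$ replaced by $R_i'$ your second and third paragraphs go through verbatim, and this also covers the case $p=2$, which you had deferred.
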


The rest of this section is devoted to the proof of Theorem \ref{criterion_rank_r_twist}. Throughout this section then, let $A$ be an abelian variety over the number field $k$, let $r > 0$ be an integer, let $B = A^r$ be the $r$-fold product of $A$, let $p$ be a prime, and let $L_1,\ldots,L_r$ be pairwise non-isomorphic non-archimedean completions of $k$ satisfying conditions (i)-(iii) from Theorem \ref{criterion_rank_r_twist}. We will write $L$ for the product $\prod_{i=1}^r L_i$.

We need three intermediate lemmas. After Lemma \ref{surj_homs}, some additional definitions will be made that play a crucial role in the proof. 

\begin{lemma}\label{surj_homs}
For each $i$, there exists a surjective group homomorphism $\pi_i \colon A(L_i) \rightarrow \F_p$.
\end{lemma}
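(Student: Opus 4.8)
The plan is to deduce the existence of the surjection $\pi_i$ from the assumptions (i) and (ii) applied to the trivial twist $A^c$ with $c = 0$, i.e. to $A$ itself. First I would invoke the structure theory of abelian varieties over local fields: since $L_i$ is a non-archimedean completion of $k$ and $A$ has good reduction at $L_i$, the group $A(L_i)$ is a compact abelian topological group which sits in an exact sequence $0 \to A^1(L_i) \to A(L_i) \to \wt{A}(\F_{v_i}) \to 0$, where $A^1(L_i)$ is a finite-index subgroup isomorphic (as a topological group) to $\Oo_{L_i}^{\dim A}$ via the formal group, and in particular $A(L_i)$ is a finitely generated module over $\Z_\ell$ (for $\ell$ the residue characteristic) times a finite group. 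Concretely, $A(L_i) \cong \Z_{\ell_i}^{d_i} \oplus (\text{finite})$ for some $d_i \geq 0$ and residue characteristic $\ell_i$.

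Next I would observe that to produce a surjective homomorphism $A(L_i) \to \F_p$, it suffices to show that $A(L_i)/pA(L_i) \neq 0$, since any nonzero quotient of a finite $\F_p$-vector space admits a surjection onto $\F_p$. Now by assumption (i), $A(L_i)$ contains an element $P_i$ of order exactly $p$, so $A(L_i)[p] \neq 0$; hence the trivial twist $A = A^0$ satisfies the hypothesis ``$A^c(k)[p] \neq 0$'' is \emph{not} quite what we want — rather, I would apply (ii) to the twist $c = 0$ using the clause $A^0(k)[p] = A(k)[p]$. Here one must be slightly careful: (ii) as stated concerns twists $A^c$ with $c \in S$ or $A^c(k)[p] \neq 0$, and it asserts $A^c(k) \subseteq pA^c(L_i)$. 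The relevant point is instead the dual statement about the local points, so actually the cleanest route is: since $A(L_i)$ has a point of order $p$, the $p$-cotorsion $A(L_i)/pA(L_i)$ is nonzero (because in a finitely generated $\Z_{\ell_i}$-module plus finite group, the $p$-torsion and $p$-cotorsion have equal $\F_p$-dimension — they are Pontryagin-type dual — and more simply, if $P_i$ has order $p$ then $P_i \notin pA(L_i)$ unless $p$-divisibility forces a contradiction with $A(L_i)$ being profinite of the stated shape). So $A(L_i)/pA(L_i)$ is a nonzero finite-dimensional $\F_p$-vector space, and projecting onto any one-dimensional quotient gives the desired surjection $\pi_i \colon A(L_i) \twoheadrightarrow \F_p$.

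The main obstacle I anticipate is pinning down precisely why $A(L_i)[p] \neq 0$ forces $A(L_i)/pA(L_i) \neq 0$ rather than just providing a torsion element that might happen to be divisible. The resolution uses that $A(L_i)$ is (topologically) isomorphic to $\Z_{\ell_i}^{d_i} \oplus T$ with $T$ finite: if $p \neq \ell_i$, then multiplication by $p$ is an automorphism of $\Z_{\ell_i}^{d_i}$, so $A(L_i)/pA(L_i) \cong T/pT$ and $A(L_i)[p] \cong T[p]$, which have the same order since $T$ is finite; and if $p = \ell_i$, then $A(L_i)[p] \cong T[p]$ still (the $\Z_p^{d_i}$ part is torsion-free) while $A(L_i)/pA(L_i) \cong (\Z/p)^{d_i} \oplus T/pT$, which is even larger. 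Either way $A(L_i)[p] \neq 0 \Rightarrow A(L_i)/pA(L_i) \neq 0$. I should remark that assumption (ii) (applied with $c=0$, permissible because $A(k)[p] = A(k)[p]$ is covered by the ``$A^c(k)[p]\neq 0$'' clause whenever $A(k)[p]\neq 0$, and it is also harmless otherwise since it is not needed for this particular lemma) is not actually required here — Lemma \ref{surj_homs} follows from (i) alone — but the interplay with (ii) is what will make $\pi_i$ useful later, so I would set up the homomorphism compatibly with the chosen point $P_i$, e.g.\ arranging $\pi_i(P_i) \neq 0$, which is possible since $P_i \notin pA(L_i)$ and one can choose the one-dimensional quotient to be non-vanishing on the image of $P_i$.
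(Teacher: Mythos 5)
Your proof is correct and rests on the same key input as the paper's: Mattuck's theorem, which gives a finite-index torsion-free subgroup $W \cong \mathscr{O}_{L_i}^{\dim(A)}$ of $A(L_i)$. The paper packages this slightly more economically --- since $W$ is torsion-free, the order-$p$ point $P_i$ has order-$p$ image in the finite quotient $A(L_i)/W$, which therefore surjects onto $\F_p$ --- thus avoiding your direct-sum splitting $\Z_{\ell_i}^{d_i}\oplus T$ and the $p=\ell_i$ versus $p\neq\ell_i$ case analysis, while your digression through assumption (ii) and the trivial twist is, as you yourself conclude, unnecessary (the lemma follows from (i) alone).
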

\begin{proof}
Indeed, by a well-known result about abelian varieties over non-archimedean local fields \cite{mattuck}, the group $A(L_i)$ has a finite-index subgroup isomorphic to $W = \mathscr{O}_{L_i}^{\dim(A)}$, where $\mathscr{O}_{L_i}$ denotes the ring of integers of $L_i$. Since $W$ is torsion-free, the image of $P_i$ in the finite quotient $A(L_i)/W$ must have order $p$. Clearly then, there exists a surjection $A(L_i)/W \rightarrow \F_p$. This establishes the lemma.
\end{proof}

Let $V=\F_p^r$. Let the group $\twogroup(L) = \prod_{i=1}^r \twogroup(L_i)$ act on $V$ in such a way that the $i$-th basis element $m_i = (1,\ldots,-1,\ldots,1)$ of $\twogroup(L)$, with $-1$ at the $i$-th coordinate, multiplies the $i$-th component of every vector in $V$ by $-1$, while leaving the other components unchanged. We denote by $\pi'=(\pi_1,\ldots,\pi_r)$ the combined map from $A(L) = \prod_{i=1}^r A(L_i)$ to $V$. Note that the map $\pi'$ is $\twogroup(L)$-equivariant for the natural $\twogroup(L)$-action on $A(L)$. Taking into account that $B(L) = A(L)^r$, the map $\pi'$ induces a map 
$$
\pi\colon B(L) \rightarrow V^r.
$$
We endow $V^r$ with the ``diagonal'' $\twogroup(L)$-action: regarding the elements of $V$ as column vectors and the elements of $V^r$ as $r$-by-$r$-matrices, the basis vector $e_i \in \twogroup(L)$ acts on an element of $V^r$ by multiplying its $i$-th row by $-1$. The map $\pi$ is then $\twogroup(L)$-equivariant. Finally, define $\Delta \subset V^r$ as the subset containing the $v \in V^r$ having determinant zero.

\begin{lemma}
\label{the_counting_argument}
The image of $\Delta$ under the quotient map $V^r \rightarrow V^r/\twogroup(L)$ is a proper subset of $V^r/\twogroup(L)$.
\end{lemma}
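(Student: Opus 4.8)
The plan is to show that $\Delta$ is stable under the $\twogroup(L)$-action and is a proper subset of $V^r$; properness of its image in the quotient then follows at once.

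First I would unwind the action. The group $\twogroup(L) = \prod_{i=1}^r \twogroup(L_i)$ is isomorphic to $(\Z/2\Z)^r$, and a general element $g$ acts on an element $v$ of $V^r$, viewed as an $r\times r$ matrix over $\F_p$, by multiplying the rows indexed by some subset $J \subseteq \{1,\dots,r\}$ by $-1$. Since $-1$ is a unit in $\F_p$ for every prime $p$, multilinearity of the determinant in the rows gives $\det(g\cdot v) = (-1)^{\#J}\det(v)$. In particular $\det(g\cdot v)=0$ if and only if $\det(v)=0$, so $\Delta$ is a union of $\twogroup(L)$-orbits.

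It follows that the preimage of the image of $\Delta$ under the quotient map $V^r \to V^r/\twogroup(L)$ is again $\Delta$. Since the identity matrix has determinant $1\neq 0$ it does not lie in $\Delta$, so $\Delta$ is a proper subset of $V^r$; hence its image is a proper subset of $V^r/\twogroup(L)$, which is the assertion of the lemma. Concretely, the $\twogroup(L)$-orbit of the identity matrix consists of the diagonal matrices with entries $\pm 1$, all of which are invertible, and this orbit maps to a point of $V^r/\twogroup(L)$ lying outside the image of $\Delta$.

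I do not anticipate a genuine obstacle here; the only point worth recording is that the argument is completely uniform in $p$, including the degenerate case $p=2$, in which the $\twogroup(L)$-action on $V^r$ is trivial but $\Delta\neq V^r$ still holds because $\F_2$ admits invertible square matrices.
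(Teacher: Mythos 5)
Your proof is correct and follows essentially the same route as the paper: observe that $\Delta$ is stable under the $\twogroup(L)$-action (so its image in the quotient consists exactly of orbits of determinant-zero matrices) and that some matrix, e.g.\ the identity, lies outside $\Delta$, whence the image is proper. Your extra details (multilinearity in the rows, the remark on $p=2$) are fine but not needed beyond what the paper's one-line argument already contains.
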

\begin{proof}
The $\twogroup(L)$-action on $V^r$ sends determinant-zero matrices to determinant-zero matrices. Hence the image of $\Delta$ in $V^r/\twogroup(L)$ consists precisely of the orbits of determinant-zero matrices, which of course form a proper subset of $V^r/\twogroup(L)$.
\end{proof}

\begin{lemma}
\label{linear_dependencies}
Let $c$ be an element of $\HH^1(k,\twogroup)$ such that for all $i \in \{1,2,\ldots,r\}$ we have that $\res_{L_i/k}(c)$ is the identity of $\HH^1(L_i,\twogroup)$. Assume that either $c \in S$ or the rank of $A^c(k)$ is less than $r$. We claim: the image of the composition of group homomorphisms
$$
B^c(k) \rightarrow B^c(L) \rightarrow B(L) \stackrel{\pi}{\rightarrow} V^r,
$$
is contained in $\Delta$. Here, the first arrow is the natural inclusion, and the second arrow is induced by the isomorphisms $F^{(i)}_{c,1} \colon B^c_{L_i} \stackrel{\sim}{\rightarrow} B_{L_i}$ provided by Proposition \ref{commutative}.
\end{lemma}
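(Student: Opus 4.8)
The plan is to make the composition in the statement completely explicit, identify it with the map $Q\mapsto\bigl(\psi_i(Q_j)\bigr)_{i,j}$ for suitable group homomorphisms $\psi_i\colon A^c(L_i)\to\F_p$, and then deduce that its image lies in $\Delta$ from a bound on the $\F_p$-dimension of the image of $\Psi:=(\psi_1,\ldots,\psi_r)$ restricted to $A^c(k)$.

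First I would track a point through the three maps. Since $B=A^r$ we have $B^c=(A^c)^r$, and because every $\res_{L_i/k}(c)$ is trivial, Proposition \ref{commutative} (applied to $A$) provides a group isomorphism $F^{(i)}\colon A^c(L_i)\stackrel{\sim}{\rightarrow}A(L_i)$ whose $r$-fold product is the isomorphism $F^{(i)}_{c,1}\colon B^c_{L_i}\stackrel{\sim}{\rightarrow}B_{L_i}$ appearing in the lemma. Following $Q=(Q_1,\ldots,Q_r)\in A^c(k)^r=B^c(k)$ through $B^c(k)\hookrightarrow B^c(L)\to B(L)\xrightarrow{\pi}V^r$ — the first map being the diagonal inclusion into $\prod_{i=1}^r A^c(L_i)^r$, the second applying $F^{(i)}$ to each of the $r$ factors inside the $L_i$-component, and $\pi$ applying $\pi'=(\pi_1,\ldots,\pi_r)$ to each of the $r$ copies of $A(L)$ in $B(L)=A(L)^r$ — and matching the two descriptions $B(L)=\prod_{i=1}^r A(L_i)^r=A(L)^r$, one finds that the resulting element of $V^r$, viewed as an $r\times r$ matrix over $\F_p$, has $(i,j)$-entry $\psi_i(Q_j)$ with $\psi_i:=\pi_i\circ F^{(i)}$. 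Its $j$-th column is thus $\Psi(Q_j)$, where $\Psi\colon A^c(k)\to\F_p^r$, so the matrix has determinant zero whenever $\Psi(Q_1),\ldots,\Psi(Q_r)$ are $\F_p$-linearly dependent. Hence it suffices to prove $\dim_{\F_p}\Psi(A^c(k))<r$.

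For this I would split into cases. As $\F_p$ has exponent $p$, each $\psi_i$ kills $pA^c(k)$, and since $F^{(i)}$ is a group isomorphism we have $\psi_i(A^c(k))=\pi_i\bigl(F^{(i)}(A^c(k))\bigr)$. If $c\in S$, or if $A^c(k)[p]\neq 0$, then hypothesis (ii) gives $A^c(k)\subseteq pA^c(L_i)$, hence $F^{(i)}(A^c(k))\subseteq pA(L_i)$ and $\psi_i|_{A^c(k)}=0$; so $\Psi$ vanishes on $A^c(k)$ and its image has dimension $0<r$. Otherwise $c\notin S$, $A^c(k)[p]=0$, and by hypothesis $\rank A^c(k)<r$; then $A^c(k)_{\tors}$ has order prime to $p$, so $A^c(k)/pA^c(k)\cong(\Z/p)^{\rank A^c(k)}$ has $\F_p$-dimension $<r$, and since $\Psi$ factors through this quotient, $\dim_{\F_p}\Psi(A^c(k))<r$. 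This covers all cases.

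All the computations here are routine; the one delicate point is the bookkeeping in the first step — keeping the index $i$ ranging over the completions $L_i$ rigorously separate from the index $j$ ranging over the $r$ factors of $B=A^r$, so that the composite really produces the matrix $\bigl(\psi_i(Q_j)\bigr)_{i,j}$ rather than its transpose or a permuted variant, and correctly invoking the two ways of writing $B(L)$. The rest relies only on the elementary observation that a homomorphism to $\F_p^r$ whose image spans a proper subspace sends every $r$-tuple of elements to a linearly dependent $r$-tuple.
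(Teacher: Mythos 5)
Your proof is correct and follows essentially the same route as the paper's: the same case split, with hypothesis (ii) forcing the image to vanish when $c\in S$ or $A^c(k)[p]\neq 0$, and the rank bound together with $A^c(k)[p]=0$ forcing an $\F_p$-linear dependence among the columns of the matrix in the remaining case. Your dimension count via $A^c(k)/pA^c(k)\cong(\Z/p)^{\rank A^c(k)}$ is just a slight repackaging of the paper's explicit integer relation with coefficients not all divisible by $p$, and your explicit $(i,j)$-bookkeeping of the composite map is a welcome (if routine) elaboration of what the paper leaves implicit.
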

\begin{proof}
First, suppose that $c$ is such that $c \in S$ or $A^c(k)[p] \neq 0$. Then by assumption (ii) of Theorem \ref{criterion_rank_r_twist}, the image of $B^c(k)$ 
is contained in $pB^c(L)$, and therefore its image in $V^r$ is identically $0$. We may therefore suppose that $c$ is not contained in $S$, so that the rank of $A^c$ is less than $r$, and that $A^c(k)[p]=0$. Let $Q = (Q_1,\ldots,Q_r) \in A^c(k)^r$, and let $\ol{Q}=(\ol{Q}_1,\ldots,\ol{Q}_r)$ denote its image in $V^r$, so that the $\ol{Q}_i$ are elements of $V$ such that $\ol{Q}_i = \pi_i(Q_i)$ for each $i$. Since the rank of $A^c(k)$ is at most $r-1$, we have $a_1 Q_1 + a_2 Q_2 + \ldots + a_r Q_r = 0$ for some integers $a_1,a_2,\ldots,a_r$. Since $A^c(k)[p] = 0$, we may assume that the $a_i$ are not all divisible by $p$. Then we must have a non-trivial relation $a_1 \ol{Q}_1 + a_2 \ol{Q}_2 + \ldots + a_r \ol{Q}_r = 0$, in $V$, showing that $\ol{Q} \in \Delta$.
\end{proof}
\begin{proof}[Proof of Theorem \ref{criterion_rank_r_twist}]
We assume that, for all $c \notin S$, the rank of $A^c$ is less than $r$. Let $\HH^1(k)_1 \subset \HH^1(k,\twogroup)$ be the subset consisting of those $c \in \HH^1(k,\twogroup)$ whose restriction to each of the $L_i$ is the identity. We then consider the following diagram.

\[
\xymatrix{
\coprod\limits_{c \in \HH^1(k)_1} B^c(k)/\twogroup(k) \ar[r]^{~~~\wt{f}} & B(L)/\twogroup(L) \ar@{->>}[r]^{\ol{\pi}} & V^r/\twogroup(L)  \\\
\coprod\limits_{c \in \HH^1(k)_1} B^c(k) \ar@{->>}[u] \ar[r]         & B(L)  \ar@{->>}[u]  \ar@{->>}[r]^{\pi} & V^r \ar@{->>}[u]
}
\]

The maps forming the left-hand square are induced by those from the diagram of Proposition \ref{commutative_diagram}. It was shown there that the left-hand square is commutative. The map $\pi \colon B(L) \rightarrow V^r$ was defined earlier in the proof, and it was observed to be $\twogroup(L)$-equivariant. We define $\ol{\pi} \colon B(L)/\twogroup(L) \rightarrow V^r/\twogroup(L)$ as the map induced by $\pi$, while we let the right-hand vertical map be the quotient map; with these choices, it is clear that the entire diagram is commutative. We consider $V^r$ and $V^r/\twogroup(L)$ as topological spaces endowed with the discrete topology. Then $\pi$ is continuous by the fact that the topology on $B(L)$ is the profinite one, while $\ol{\pi}$ is continuous by the quotient property of $B(L)/\twogroup(L)$.

By Lemma \ref{linear_dependencies}
, the composite map
\begin{equation}
\label{non_surjective_quotient_map}
\coprod_{c \in \HH^1(k)_1} B^c(k)/\twogroup(k) \rightarrow V^r/\twogroup(L)
\end{equation}
factors via $\Delta/\twogroup(L)$. By Lemma \ref{the_counting_argument}, the image of the map \eqref{non_surjective_quotient_map} is a proper subset of $V^r/\twogroup(L)$. Since $\ol{\pi}$ is a continuous surjection and $V^r/\twogroup(L)$ is discrete, this means that the image of $\wt{f}$ does not lie dense. But then the image of the map $f_{L/k}$ (as in the diagram from Proposition \ref{commutative_diagram}) does not lie dense. Again by the diagram of Proposition \ref{commutative_diagram}, this shows that $X_0(k)$ cannot be dense in $X_0(L)$, which is a contradiction.
\end{proof}

\section{Brauer groups of Kummer varieties over number fields}

In this section, we let $k$ be a number field. Let $B$ be an abelian variety over $k$. Let $\wh{X} = \Km(B)$ be its Kummer variety and let $X$ be any smooth projective model of $\wh{X}$. We denote by $\Br(X)$ the Brauer group of $X$, and by $\Br_0(X)$ the image in $\Br(X)$ of $\Br(k)$.
\begin{proposition}
\label{brfinite}
The quotient $\Br(X)/\Br_0(X)$ is finite.
\end{proposition}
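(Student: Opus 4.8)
The plan is to decompose $\Br(X)/\Br_0(X)$, by means of the Hochschild--Serre spectral sequence, into an \emph{algebraic} part $\Br_1(X)/\Br_0(X)$ --- where $\Br_1(X)=\ker\!\big(\Br(X)\to\Br(\ol X)\big)$ --- and a \emph{transcendental} part $\Br(X)/\Br_1(X)$, and to bound each in turn; the transcendental part will be reduced, along the étale double cover $B_0\to\Km_0(B)$ of Section~2, to the finiteness of $\Br(\ol B)^{\Gamma_k}$, which is the theorem of Skorobogatov and Zarhin. (Here $\ol{(-)}$ denotes base change to an algebraic closure $\ol k$ of $k$, and $\Gamma_k=\Gal(\ol k/k)$.) One disposes of trivialities immediately: if $\dim B\le1$ then $\wh X$ is a point or $\p^1_k$ and there is nothing to prove, so assume $\dim B\ge2$. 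Since $\Br(X)/\Br_0(X)$ is a birational invariant of smooth projective varieties, we may assume that $X\to\wh X$ is an isomorphism over the smooth locus $\Km_0(B)$ of $\wh X$, so that $X_0:=\Km_0(B)$ is a dense open subscheme of $X$; recall that $B_0\to X_0$ is a $\twogroup$-torsor, hence finite étale of degree~$2$. We are thus reduced to showing that $\Br_1(X)/\Br_0(X)$ and $\Br(X)/\Br_1(X)$ are both finite.

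\emph{The algebraic part.} Hochschild--Serre furnishes an injection $\Br_1(X)/\Br_0(X)\hookrightarrow\HH^1(k,\Pic(\ol X))$, so it is enough to see that $\HH^1(k,\Pic(\ol X))$ is finite. The key point is that $\HH^1(\ol X,\Oo_{\ol X})=0$. Indeed, since $\wh X=\ol B/\langle\pm1\rangle$ has quotient (hence rational) singularities and $\langle\pm1\rangle$ has order prime to the characteristic, $\HH^1(\ol X,\Oo_{\ol X})\cong\HH^1(\ol B,\Oo_{\ol B})^{\langle\pm1\rangle}$, and $[-1]^{*}$ acts on $\HH^1(\ol B,\Oo_{\ol B})$ by $-1$, so the invariants vanish. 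Hence $\Pic^0(\ol X)=0$ and $\Pic(\ol X)=\NS(\ol X)$ is finitely generated, and the finiteness of $\HH^1(k,\Pic(\ol X))$ then follows by the usual argument: $\Gamma_k$ acts on $\Pic(\ol X)$ through a finite quotient $\Gal(k'/k)$, $\Hom_{\mathrm{cont}}(\Gamma_{k'},\Z^{\rho})=0$ takes care of the free part, and the torsion of $\NS(\ol X)$ --- if any --- is handled either by observing that the Brauer classes in play are unramified outside a finite set of places, or by noting that $\NS(\ol X)$ is torsion-free because a smooth projective model of a Kummer variety is simply connected.

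\emph{The transcendental part.} Hochschild--Serre furnishes an injection $\Br(X)/\Br_1(X)\hookrightarrow\Br(\ol X)^{\Gamma_k}$. Since $\ol X$ is regular and $\ol X_0$ is a dense open subscheme, restriction gives a $\Gamma_k$-equivariant injection $\Br(\ol X)\hookrightarrow\Br(\ol X_0)$, so it suffices to bound $\Br(\ol X_0)^{\Gamma_k}$. Let $q\colon\ol B_0\to\ol X_0$ be the degree-$2$ étale cover from Section~2 (base-changed to $\ol k$). As $\cor_q\circ q^{*}$ is multiplication by $2$ on $\Br(\ol X_0)$, the kernel of $q^{*}\colon\Br(\ol X_0)\to\Br(\ol B_0)$ lies in $\Br(\ol X_0)[2]$; the latter is finite, being a subquotient of the étale cohomology group $\HH^2(\ol X_0,\twogroup)$, which is finite by the general finiteness of étale cohomology with finite coefficients of a variety over a separably closed field. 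On the other hand $\ol B_0=\ol B\setminus\ol B[2]$ with $\ol B[2]$ of codimension $\dim B\ge2$ in the regular scheme $\ol B$, so purity for the Brauer group identifies $\Br(\ol B_0)$ with $\Br(\ol B)$ as $\Gamma_k$-modules. Taking $\Gamma_k$-invariants in $0\to\ker q^{*}\to\Br(\ol X_0)\xrightarrow{q^{*}}\Br(\ol B)$ then exhibits $\Br(\ol X_0)^{\Gamma_k}$ as an extension of a subgroup of $\Br(\ol B)^{\Gamma_k}$ by $(\ker q^{*})^{\Gamma_k}$; the first is finite by the Skorobogatov--Zarhin theorem that $\Br(\ol B)^{\Gamma_k}$ is finite for every abelian variety $B$ over a field finitely generated over $\Q$, and the second is finite by the discussion above. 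Hence $\Br(\ol X_0)^{\Gamma_k}$, and therefore $\Br(X)/\Br_1(X)$, is finite; combined with the previous paragraph this proves the proposition.

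The one genuinely deep input is the Skorobogatov--Zarhin finiteness theorem for $\Br(\ol B)^{\Gamma_k}$; everything else is a dévissage. The steps I would watch most carefully are the $\Gamma_k$-equivariance of all the maps involved, the codimension statements for $\ol X\setminus\ol X_0$ and $\ol B\setminus\ol B_0$, and the treatment of the (at worst $2$-primary) torsion of $\Pic(\ol X)$ in the algebraic part.
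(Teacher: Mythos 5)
Your proof is correct and follows essentially the same route as the paper: the Hochschild--Serre splitting into $\Br_1(X)/\Br_0(X)$ and $\Br(X)/\Br_1(X)$, finiteness of the algebraic part from $\Pic(X_{\ol{k}})=\NS(X_{\ol{k}})$ being finitely generated and torsion-free, and finiteness of the transcendental part via restriction--corestriction along the degree-$2$ cover by $B$, reducing to the Skorobogatov--Zarhin finiteness of $\Br(B_{\ol{k}})^{\Gamma}$ with a finite $2$-torsion kernel. The only deviations are in implementation: the paper bounds the $2$-torsion kernel using the structure result $\Br(X_{\ol{k}})\cong(\Q/\Z)^{b_2-\rho}\oplus(\text{finite})$ on the smooth projective model rather than your passage to $X_0$ with purity and finiteness of $\HH^2_{\mathrm{et}}$ with $\twogroup$-coefficients, and it deduces both the triviality of the Albanese and the torsion-freeness of $\NS(X_{\ol{k}})$ from $\pi_1(X(\C))=0$ --- which is the branch of your ``either/or'' for the torsion that actually carries the argument, the ``unramified outside finitely many places'' alternative being too vague to stand on its own.
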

\begin{proof}
Let $\Br_1(X)=\ker\left( \Br(X) \rightarrow \Br(X_{\ol{k}})\right)$ denote the subgroup of algebraic elements in $\Br(X)$. (As usual, we write ${X}_{\ol{k}}$ for the base-change of $X$ to $\ol{k}$.) It suffices to prove that the quotients $\Br_1(X)/\Br_0(X)$ and $\Br(X)/\Br_1(X)$ are finite. 

By the Hochschild--Serre spectral sequence, the quotient $\Br_1(X)/\Br_0(X)$ is isomorphic to the Galois cohomology group $\HH^1(k,\Pic({X}_{\ol{k}}))$. Since the topological fundamental group $\pi_1(X(\C))$ vanishes \cite{pi1_of_Kummer}, the variety $X$ has trivial Albanese variety, so that $\HH^1(k,\Pic({X}_{\ol{k}})) = \HH^1(k,\NS({X}_{\ol{k}}))$. Again since $\pi_1(X(\C))$ vanishes, the N\'eron--Severi group of ${X}_{\ol{k}}$ is torsion-free, which implies that $\NS({X}_{\ol{k}})$ is a finitely generated torsion-free abelian group. This implies that $\HH^1(k,\NS({X}_{\ol{k}}))$, and therefore $\HH^1(k,\Pic({X}_{\ol{k}}))$, are both finite.

Write $\Gamma = \Gal(\ol{k}/k)$. The group $\Br(X)/\Br_1(X)$ embeds into $\Br({X}_{\ol{k}})^{\Gamma}$. Let $\res$ be the restriction map from $\Br({X}_{\ol{k}})^{\Gamma}$ to $\Br({B}_{\ol{k}})^{\Gamma}$, where ${B}_{\ol{k}}$ is the base-change of $B$ to $\ol{k}$. The group $\Br({B}_{\ol{k}})^{\Gamma}$ is finite by \cite[Theorem 1.1(i)]{finiteness}; hence the map $\res$ has finite image. The elements of the kernel of $\res$ have order at most $2$ by a standard restriction-corestriction argument. The group $\Br({X}_{\ol{k}})$ is isomorphic to the direct sum of $(\Q/\Z)^{b_2-\rho}$ and a finite abelian group \cite[eq. (7)]{descent_on_Br}, where $b_2$ is the second Betti number of ${X}_{\ol{k}}$ and $\rho$ is the Picard number of ${X}_{\ol{k}}$; therefore $\Br({X}_{\ol{k}})$ has only finitely many elements of order at most $2$. It follows that the kernel of $\res$ is finite. Since its image was already shown to be finite, we must have that $\Br({X}_{\ol{k}})^{\Gamma}$ is finite. Since $\Br(X)/\Br_1(X)$ injects into $\Br({X}_{\ol{k}})^{\Gamma}$, it too is finite, and so we are done.
\end{proof}

\begin{proposition}
\label{brauermaninmain}
If $X$ is a smooth projective model of the Kummer variety of an abelian variety $B$ over $k$, then $X(\A_k)^{\Br}$ is a non-empty open and closed subset of $X(\A_k)$.
\end{proposition}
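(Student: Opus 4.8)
The plan is to break the statement into two claims: that $X(\A_k)^{\Br}$ is open and closed in $X(\A_k)$, and that it is non-empty. For the topological claim, I would argue as follows. By Proposition \ref{brfinite}, the quotient $\Br(X)/\Br_0(X)$ is finite, so we may choose finitely many classes $\alpha_1,\dots,\alpha_n \in \Br(X)$ whose images generate $\Br(X)/\Br_0(X)$. Since the elements of $\Br_0(X)$ pair trivially with every adelic point (this is the global reciprocity input, and $\Br_0(X)$ contributes nothing to the obstruction), the Brauer--Manin set $X(\A_k)^{\Br}$ equals the common left kernel of the pairings against $\alpha_1,\dots,\alpha_n$. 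For each fixed $\alpha_j$, the map $X(\A_k) \to \Q/\Z$, $x \mapsto \langle x, \alpha_j\rangle$ is continuous when $\Q/\Z$ carries the discrete topology (as recalled in the excerpt), so its kernel is both open and closed in $X(\A_k)$. A finite intersection of open-and-closed sets is open and closed, giving the first claim.

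For non-emptiness, the key point is that $X$, being a smooth projective model of $\Km(B)$, has an adelic point and moreover this adelic point can be adjusted so as to land in the Brauer--Manin set. Concretely, $X(\A_k) = \prod_v X(k_v)$ is non-empty: at each place $v$, $\Km(B)(k_v)$ is non-empty (it contains the image of the origin of $B$, or rather of the $2$-torsion locus, which is a $k_v$-point since $0 \in B(k_v)$ and $q$ is defined over $k$), and any $k_v$-point of $\wh{X}=\Km(B)$ lying in the smooth locus lifts to a $k_v$-point of $X$; one must take a small amount of care at the singular locus, but smoothness of $X$ together with properness and the valuative criterion lets one spread out. Then, to kill the Brauer--Manin pairing, I would invoke the standard fact that for all but finitely many places $v$, every class in $\Br(X)$ evaluates trivially on all of $X(k_v)$ (the $\alpha_j$ extend to a smooth model over a ring of $S$-integers, and unramified Brauer classes pair trivially at places of good reduction by Lang's theorem / the vanishing of $\Br$ of a finite field). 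At the remaining finitely many places one uses that the local evaluation maps $X(k_v)\to \Br(k_v)=\Q/\Z$ (for $v$ finite) have image a cyclic subgroup, and one is free to choose the local components at those finitely many places to make the total sum vanish — this is possible precisely because each local evaluation map, restricted to $X(k_v)$, is surjective onto its image, which is all of the relevant cyclic group, or at worst one shows the finite sum can be arranged to be zero by a pigeonhole/counting argument over the finitely many choices.

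Actually, a cleaner route to non-emptiness avoids analyzing evaluation maps place-by-place: since $\Km(B)$ is dominated by the abelian variety $B$ via $q\colon B\to \wh X$, and $B(\A_k)^{\Br}$ is known to be non-empty and in fact equal to $\ol{B(k)}$ in many relevant cases, one can push an adelic point of $B$ orthogonal to $\Br(B)$ down to $\wh X$ and then lift to $X$; functoriality of the Brauer--Manin pairing under $q^*\colon \Br(X)\to \Br(B)$ shows the image lies in $\wh X(\A_k)^{\Br}$ modulo the behaviour of $q$ at the $2$-torsion and the resolution $X\to\wh X$. I expect the main obstacle to be exactly this interaction between the Brauer group and the singularities: controlling how $\Br(X)$ sits relative to $\Br(\wh X_{\mathrm{sm}})$ and $\Br(B)$, and ensuring that an adelic point can be chosen in the smooth locus of $\wh X$ everywhere locally so that the lift to $X$ is unobstructed. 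The topological (open-and-closed) part is routine given Proposition \ref{brfinite}; the substance is the non-emptiness, and there I would lean on the standard local-global machinery for Brauer--Manin sets of proper varieties together with the good-reduction vanishing at almost all places.
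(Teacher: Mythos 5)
Your topological half is fine and is exactly the paper's argument: by Proposition \ref{brfinite} the quotient $\Br(X)/\Br_0(X)$ is finite, elements of $\Br_0(X)$ pair trivially with adelic points by reciprocity, and the pairing is continuous in the first variable with $\Q/\Z$ discrete, so $X(\A_k)^{\Br}$ is a finite intersection of open-and-closed kernels.

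The non-emptiness half, however, has a genuine gap. Showing $X(\A_k)\neq\emptyset$ and then claiming one can ``adjust'' the local components to make the total Brauer pairing vanish is not a proof: at the finitely many bad places the evaluation maps $X(k_v)\to\Q/\Z$ need not be surjective onto any prescribed cyclic group (they can even be constant), and no pigeonhole argument forces the sum of finitely many constrained local invariants to be zero. Indeed, for general varieties an adelic point can exist while $X(\A_k)^{\Br}=\emptyset$ (the Brauer--Manin obstruction to the Hasse principle), so any argument of this shape must use something special about $X$; your alternative route via $B(\A_k)^{\Br}$ and functoriality of $q$ leaves exactly the points you flag (behaviour at the $2$-torsion locus and at the resolution $X\to\wh{X}$) unresolved, and also invokes non-emptiness of $B(\A_k)^{\Br}$ only ``in many relevant cases.'' The paper's argument is much simpler and sidesteps all of this: $B(k)\neq\emptyset$ because it contains $0$, so by the Lang--Nishimura lemma applied to the rational map $B\dashrightarrow X$ between smooth proper varieties, $X(k)\neq\emptyset$; since $X(k)\subset X(\A_k)^{\Br}$ by global class field theory, the Brauer--Manin set is non-empty. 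Note that Lang--Nishimura is genuinely needed here, since $0$ is a $2$-torsion point and its image lies in the singular locus of $\Km(B)$ when $\dim(B)>1$, so one cannot simply push it forward and lift to $X$.
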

\begin{proof}
Observe that $B(k)$ is non-empty since it contains $0$. The Lang--Nishimura Lemma applied to the rational map $B \dashrightarrow X$ between smooth and proper varieties then implies that $X(k)$ is non-empty. The subset $X(\A_k)^{\Br}$ is therefore also non-empty, for it contains $X(k)$. It is open and closed, since it is the intersection of the open and closed subsets
$$
X(\A_k)^{\mathcal{A}} = \left\{ (x_v)_v \in X(\A_k) : \left\langle (x_v)_v,\mathcal{A}\right\rangle=0  \right\},
$$
where $\left\langle.,.\right\rangle$ is the Brauer--Manin pairing, and where $\mathcal{A}$ runs through a set of coset representatives of the group quotient $\Br(X)/\Br_0(X)$, which is finite by Proposition \ref{brfinite}.
\end{proof}

\section{The Brauer--Manin obstruction and ranks of twists}

We fix again a number field $k$. 

\begin{lemma}\label{cebotarev}
Let $A$ be an abelian variety over $k$, and let $T$ be any finite set of places of the number field $k$. Let $S' \subset \HH^1(k,\twogroup)$ be a finite subset. For every prime $p$ and every integer $r>0$, there exist pairwise non-isomorphic non-archimedean completions $L_1,\ldots,L_r$ of $k$, none of which arise from any place in $T$, such that for each $i \in \{1,2,\ldots,r\}$ we have:
\begin{itemize}
\item[(i)] the group $A(L_i)$ contains a point of order $p$;
\item[(ii)] for all $c \in S'$ we have $A^c(k) \subset pA^c(L_i)$.
\end{itemize}
\end{lemma}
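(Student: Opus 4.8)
The statement is a Chebotarev-type density argument, so the plan is to find, for each condition, a congruence condition on the Frobenius (equivalently, a splitting condition in a suitable number field), and then to realize all these conditions simultaneously at infinitely many primes. For condition (i), recall that if $v$ is a place of good reduction for $A$ lying over a rational prime $\ell$, then reduction gives an injection $A(k_v)[p] \hookrightarrow A(k_v(\mathbf{F}_v))$ on prime-to-$\ell$ torsion, and conversely, provided $v \nmid p$, a point of order $p$ in the reduction lifts to $A(k_v)$ by smoothness (Hensel). So condition (i) is guaranteed as soon as $v$ splits completely in the field $k(A[p])$ obtained by adjoining the coordinates of the $p$-torsion: then $\mathrm{Frob}_v$ acts trivially on $A[p]$, the reduction $\tilde{A}(\mathbf{F}_v)$ contains the full $(\mathbf{Z}/p)^{2\dim A}$, and in particular a point of order $p$ which lifts to $A(k_v)$.

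For condition (ii), the key observation is that $S'$ is finite, so it suffices to handle each twist $A^c$, $c \in S'$, separately and intersect the resulting (still positive-density) sets of places. Fix such a $c$; write $B' = A^c$ and let $M = B'(k)$, a finitely generated group with generators $g_1,\dots,g_s$. I want $M \subset pB'(k_v)$, i.e. each $g_j$ is $p$-divisible in $B'(k_v)$. Since $g_j \in B'(k)$, the point $\tfrac{1}{p}g_j$ (any solution of $px = g_j$) is defined over the field $k_j := k(B'[p], \tfrac1p g_j)$, which is a finite extension of $k$. If $v$ splits completely in the compositum $N := k(A[p])\cdot\prod_{c\in S'}\prod_j k_j^{(c)}$ — a single finite extension of $k$ — then every $\tfrac1p g_j$ is defined over $k_v$, so $g_j \in pB'(k_v)$ for all $j$, hence $M \subset pB'(k_v)$; and simultaneously condition (i) holds. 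Here one should be slightly careful that $\tfrac1p g_j$ might already have some coordinates defined over a subfield, but enlarging the field only shrinks the set of split places, which is harmless, so we may simply take the full compositum.

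Now the endgame: by the Chebotarev density theorem, the set of places $v$ of $k$ splitting completely in the finite Galois closure of $N/k$ has positive density, hence is infinite; moreover we may discard the finitely many places in $T$, the finitely many places ramifying in $N$, the finitely many bad places for $A$ (and for the finitely many $A^c$, $c\in S'$), and the finitely many places above $p$, still leaving infinitely many good places. Choosing $L_1,\dots,L_r$ to be the completions of $k$ at any $r$ distinct such places — which are pairwise non-isomorphic since they lie over distinct rational primes, or at least can be so chosen — gives the desired completions. The main (and really only) subtlety is the bookkeeping in condition (ii): verifying that $p$-divisibility of a fixed finite generating set over $k_v$ forces $B'(k)\subset pB'(k_v)$, and that the relevant division points generate a finite extension; once that is set up, everything else is standard Chebotarev plus good-reduction lifting of torsion. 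I would write the proof by first fixing the generating sets and defining $N$, then invoking Chebotarev, then checking (i) and (ii) at a split place.
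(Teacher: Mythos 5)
Your proposal is correct and follows essentially the same route as the paper: one builds a single finite extension of $k$ (the paper's field $\ell$, your compositum $N$) over which $A$ acquires a point of order $p$ and over which a generating set of each $A^c(k)$, $c\in S'$, becomes $p$-divisible, and then applies the \v{C}ebotarev density theorem to get infinitely many completely split places, discarding $T$ and other bad places. Your Hensel/reduction detour for condition (i) is unnecessary (complete splitting in $k(A[p])$ already puts the $p$-torsion inside $k_v$), but this does not affect correctness.
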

\begin{proof}
It follows from the finiteness of $S'$ and from the Mordell--Weil theorem that there exists a number field $\ell \supset k$ such that $A(\ell)[p] \neq 0$ and such that for all $c \in S'$ we have $A^c(k) \subset pA^c(\ell)$. The {\v C}ebotarev density theorem implies that the set of places of $k$ splitting completely in $\ell$ has positive density, and the lemma follows.
%
\end{proof}

\begin{remark}\label{density_zero_remark}\upshape
The proof of Lemma \ref{cebotarev} shows that we could have relaxed the assumption that $T$ is finite to $T$ being merely of density zero. We will not make use of this fact, however. 
\end{remark}

\begin{theorem}
\label{main}
Let $A$ be an abelian variety over the number field $k$, and let $r>0$ be an integer. Write $B=A^r$. Let $X$ be a smooth projective variety birationally equivalent to $\Km(B)$. Suppose there exists a finite set $T \subset \Omega_k$ such that, for all $r$-element subsets $T'$ of $\Omega_k \setminus T$, we have that $X(k)$ is dense in $\prod_{v \in T'} X(k_v)$, when the latter set is equipped with the product topology. Then $A$ has infinitely many twists of rank at least $r$.
\end{theorem}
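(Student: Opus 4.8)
The plan is to derive Theorem \ref{main} from the criterion of Theorem \ref{criterion_rank_r_twist}, fed with local fields supplied by Lemma \ref{cebotarev}. The first move is to reduce the statement to the assertion that for every finite subset $S \subset \HH^1(k,\twogroup)$ there exists $c \in \HH^1(k,\twogroup)$ with $c \notin S$ and $\rank A^c \geq r$. Granting this, one builds an infinite sequence of such classes inductively; and since a given twist of $A$ arises from only finitely many classes $c \in k^{\ast}/k^{\ast 2} = \HH^1(k,\twogroup)$ (standard, e.g.\ because $\Aut A_{\ol{k}}$ is finitely generated and $\HH^1(k,\twogroup)$ is an elementary abelian $2$-group), this produces infinitely many twists of rank at least $r$.

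So fix a finite subset $S \subset \HH^1(k,\twogroup)$. A preliminary observation: for any odd prime $p$, the set $S_0 = \{\, c \in \HH^1(k,\twogroup) : A^c(k)[p] \neq 0 \,\}$ is finite. Indeed, writing $\epsilon_c \colon \Gamma_k \to \{\pm 1\} \subset \F_p^{\ast}$ for the quadratic character attached to $c$, one has an isomorphism of $\Gamma_k$-modules $A^c[p] \cong A[p] \otimes \epsilon_c$, so $A^c(k)[p] \neq 0$ forces the one-dimensional representation $\epsilon_c$ to embed into the finite $\Gamma_k$-module $A[p]$; as $A[p]$ has only finitely many submodules and distinct classes $c$ yield distinct characters $\epsilon_c$, only finitely many $c$ occur. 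Now fix one odd prime $p$, enlarge $T$ so that it also contains every place of bad reduction of $A$, put $S' = S \cup S_0$ (a finite set), and apply Lemma \ref{cebotarev} to $A$, the finite set $T$, the finite set $S'$, the prime $p$ and the integer $r$. This yields pairwise non-isomorphic non-archimedean completions $L_1,\dots,L_r$ of $k$, none arising from a place of $T$ — hence all of good reduction for $A$ — such that each $A(L_i)$ has a point of order $p$ and $A^c(k) \subset p A^c(L_i)$ for every $c \in S'$ and every $i$. These are precisely conditions (i) and (ii) of Theorem \ref{criterion_rank_r_twist} for the finite set $S'$, since by the choice of $S'$ the hypothesis ``$c \in S'$ or $A^c(k)[p] \neq 0$'' there is equivalent to ``$c \in S'$''.

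It remains to verify condition (iii), namely that $\Km_0(B)(k)$ is dense in $\prod_{i=1}^r \Km_0(B)(L_i)$, using the approximation hypothesis of Theorem \ref{main}. Let $v_1,\dots,v_r$ be the places with $k_{v_i} = L_i$; then $T' = \{v_1,\dots,v_r\}$ is an $r$-element subset of $\Omega_k \setminus T$, so by hypothesis $X(k)$ is dense in $\prod_{i=1}^r X(L_i)$. Since $X$ is birationally equivalent to $\wh{X} = \Km(B)$ and $\Km_0(B)$ is open and dense in the irreducible variety $\wh{X}$, one can choose an open subscheme $U$ which is dense in $\Km_0(B)$ and which, via the birational map, is identified with an open dense subscheme of $X$. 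A $k$-point of $X$ lying $v_i$-adically close enough to a given point of the open set $U(L_i) \subset X(L_i)$ lies in $U(L_i)$, and a $k$-point of $X$ whose base change to some $L_i$ lands in the open subscheme $U$ already lands in $U$ over $k$; hence density of $X(k)$ in $\prod_i X(L_i)$ passes to density of $U(k)$ in $\prod_i U(L_i)$. Finally, $\Km_0(B)$ is smooth over $k$ — it is the base of the \'etale $\twogroup$-torsor $B_0 \to \Km_0(B)$ — and geometrically irreducible, so the closed subset $\Km_0(B) \setminus U$ has strictly smaller dimension; by the implicit function theorem over the local field $L_i$, the locus $U(L_i)$ is dense in $\Km_0(B)(L_i)$ for each $i$. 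Combining the two approximations shows $\Km_0(B)(k)$ is dense in $\prod_i \Km_0(B)(L_i)$. With (i)--(iii) in place, Theorem \ref{criterion_rank_r_twist} produces $c \notin S'$, hence $c \notin S$, with $\rank A^c \geq r$, completing the reduction.

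I expect the main obstacle to be the verification of condition (iii): transporting the approximation hypothesis from the smooth projective model $X$ across the birational correspondence down to the non-proper variety $\Km_0(B)$, and in particular checking that a dense open subscheme stays dense at the level of $L_i$-points — this is where the smoothness of $\Km_0(B)$ and the local implicit function theorem enter. The remaining ingredients (finiteness of $S_0$, the invocation of Lemma \ref{cebotarev}, the bookkeeping with $S$, $S'$ and $T$, and the passage from ``one new $c$'' to ``infinitely many twists'') are routine.
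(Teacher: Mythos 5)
Your proposal is correct and follows essentially the same route as the paper: reduce to producing, for each finite $S$, one class $c \notin S$ with $\rank A^c \geq r$, obtain the completions $L_1,\dots,L_r$ from Lemma \ref{cebotarev} (after enlarging $T$ by the bad-reduction places and $S$ by the twists with rational $p$-torsion), verify hypotheses (i)--(iii), and invoke Theorem \ref{criterion_rank_r_twist}. The only difference is that you prove directly the two facts the paper imports from the literature — the finiteness of the set of quadratic twists with nontrivial rational $p$-torsion (cited from \cite{silvheights}) via $A^c[p] \cong A[p] \otimes \epsilon_c$, and the transfer of density from the smooth projective model $X$ to $\Km_0(B)$ (cited from \cite{renes_thesis}) via a common dense open and the local implicit function theorem — both of which are sound.
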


\begin{proof}
Let $S \subset \HH^1(k,\twogroup)$ be any finite subset. It suffices to show that there exists $c \notin S$ such that the rank of $A^c$ is at least $r$. 

By the main result of \cite{silvheights}, the number of $c \in \HH^1(k,\twogroup)$ with $A^c(k)_{\operatorname{tors}} \neq A^c(k)[2]$ is finite. Fix a prime $p>2$. Then Lemma \ref{cebotarev} yields the existence of pairwise non-isomorphic non-archimedean completions $L_1,\ldots,L_r$ of $k$, none of which arise from $T$ and all of which are of good reduction for $A$, such that for $i \in \{1,2,\ldots,r\}$ we have:

\begin{itemize}
\item[(i)] the group $A(L_i)$ contains a point $P_i$ of order $p$;
\item[(ii)] for all $c$ such that either $c \in S$ or $A^c(k)[p] \neq 0$, we have $A^c(k) \subset pA^c(L_i)$.
\end{itemize}

We write $L=\prod_{i=1}^r L_i$. By the assumptions of the theorem, the set $X(k)$ is dense in the topological space $X(L)$. By \cite[Proposition 3.7]{renes_thesis}, the density of $X(k)$ in $X(L)$ implies the density of $\Km_0(B)(k)$ in $\Km_0(B)(L)$. Therefore the conditions of Theorem \ref{criterion_rank_r_twist} are satisfied. By Theorem \ref{criterion_rank_r_twist}, there exists $c \notin S$ such that the rank of $A^c$ is at least $r$, which proves the theorem.
\end{proof}

\begin{corollary}
\label{back_to_intro_result}
Let $k$ be a number field. Let $A$ be a positive-dimensional abelian variety over $k$ and let $r>0$ be an integer. Assume that the Brauer--Manin obstruction to weak approximation on $X$ is the only one, where $X$ is some smooth projective model of $\Km(A^r)$. Then $A$ has infinitely many quadratic twists of rank at least $r$.
\end{corollary}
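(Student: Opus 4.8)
The strategy is to verify the hypothesis of Theorem~\ref{main} and then quote that theorem verbatim. Concretely, I must produce one finite set $T\subset\Omega_k$ of places such that for every finite subset $T'\subseteq\Omega_k\setminus T$ the set $X(k)$ is dense in $\prod_{v\in T'}X(k_v)$; taking $|T'|=r$ then supplies exactly what Theorem~\ref{main} needs, with $B=A^r$ (which is positive-dimensional because $A$ is, so that $X$ is a genuine smooth projective model of $\Km(A^r)$).

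First I would recall, from the proof of Proposition~\ref{brauermaninmain}, that $X(\A_k)^{\Br}=\bigcap_{\mathcal A}X(\A_k)^{\mathcal A}$, the intersection being over a \emph{finite} set of coset representatives $\mathcal A$ of $\Br(X)/\Br_0(X)$ --- finite by Proposition~\ref{brfinite}. For each such $\mathcal A$, the classical fact that a fixed Brauer class on a smooth proper variety over a number field has trivial local invariant at all but finitely many places (spread $\mathcal A$ out to an Azumaya algebra over a model of $X$ over an open subscheme of $\Spec\Oo_k$ and invoke, via Lang's theorem and Hensel's lemma, the vanishing of such a class when evaluated at local points of good reduction) yields a finite set $T_{\mathcal A}\subset\Omega_k$ outside of which $\mathrm{inv}_v(\mathcal A(x_v))=0$ for every $x_v\in X(k_v)$. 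Set $T=\bigcup_{\mathcal A}T_{\mathcal A}$ together with the archimedean places; this is finite and, crucially, depends only on the finitely many $\mathcal A$, not on any $T'$.

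Next I would observe that $X(\A_k)^{\Br}$ is insensitive to the coordinates outside $T$: for $v\notin T$ each relevant local evaluation vanishes identically, so each defining equation $\sum_v\mathrm{inv}_v(\mathcal A(x_v))=0$ in fact constrains only the coordinates indexed by $T$. Writing $\mathrm{pr}_{T'}\colon X(\A_k)\to\prod_{v\in T'}X(k_v)$ for the projection, it follows that for any finite $T'\subseteq\Omega_k\setminus T$ the restriction of $\mathrm{pr}_{T'}$ to $X(\A_k)^{\Br}$ is surjective: given $(x_v)_{v\in T'}$, take any $(y_v)_v\in X(\A_k)^{\Br}$ --- non-empty by Proposition~\ref{brauermaninmain}, which via Lang--Nishimura also guarantees $X(k_v)\neq\emptyset$ for all $v$ --- and replace $y_v$ by $x_v$ for $v\in T'$; the Brauer conditions are unaffected, so the modified point still lies in $X(\A_k)^{\Br}$.

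Finally I would push the hypothesis through this projection. By assumption $\overline{X(k)}=X(\A_k)^{\Br}$ inside $X(\A_k)$; applying the continuous map $\mathrm{pr}_{T'}$ and using $\mathrm{pr}_{T'}(\overline S)\subseteq\overline{\mathrm{pr}_{T'}(S)}$ gives
\[
\prod_{v\in T'}X(k_v)=\mathrm{pr}_{T'}\bigl(X(\A_k)^{\Br}\bigr)\subseteq\overline{\mathrm{pr}_{T'}(X(k))},
\]
so the diagonal image of $X(k)$ is dense in $\prod_{v\in T'}X(k_v)$. This is precisely the hypothesis of Theorem~\ref{main} (apply it with the finite set $T$ above), which then produces infinitely many quadratic twists of $A$ of rank at least $r$, proving the corollary. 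The only non-formal ingredient is the classical almost-everywhere triviality of local invariants of a fixed Brauer class; I expect the mild bookkeeping there --- the spreading-out argument, and checking that $T$ can be chosen uniformly in $T'$ --- to be the one real point requiring care, everything else being manipulation of product topologies combined with the finiteness already established in Proposition~\ref{brfinite}.
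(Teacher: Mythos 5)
Your proposal is correct: it verifies the hypothesis of Theorem~\ref{main} and then quotes that theorem, which is exactly the paper's strategy. The only difference is in how you extract the finite set $T$. The paper simply cites Proposition~\ref{brauermaninmain}: since $X(\A_k)^{\Br}$ is non-empty and \emph{open} in $X(\A_k)=\prod_v X(k_v)$, it contains a basic open box $\prod_{v\in T}U_v\times\prod_{v\notin T}X(k_v)$ with $T$ finite, and projecting $\overline{X(k)}=X(\A_k)^{\Br}$ away from $T$ immediately gives the required density --- no further input is needed beyond the finiteness of $\Br(X)/\Br_0(X)$ (Proposition~\ref{brfinite}) already packaged into that proposition. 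You instead bypass the openness statement and re-derive the uniformity in the places by hand, via the classical almost-everywhere triviality of the local evaluations of the finitely many coset representatives (spreading out to an Azumaya algebra over a model and using good reduction). That is a perfectly valid, standard argument, and it yields the marginally stronger conclusion that the Brauer--Manin set surjects onto $\prod_{v\in T'}X(k_v)$ for an explicit $T$ determined by the chosen representatives; but it imports an external fact the paper's route does not need, since the continuity of the Brauer--Manin pairing (hence openness of $X(\A_k)^{\Br}$) together with the definition of the product topology already does the same job in one line. Either way, the reduction to Theorem~\ref{main} (with $B=A^r$ and $T$ augmented by, say, the archimedean places) is sound, and the final application is carried out correctly.
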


\begin{proof}
Indeed, suppose that the hypothesis of the corollary is satisfied. Then by Proposition \ref{brauermaninmain} we have that for all abelian varieties $A$ over $k$, if $X$ is a smooth projective variety birationally equivalent to $\Km(A^r)$ for some integer $r>0$, then $X(k)$ is dense in $\prod_{v \notin T} X(k_v)$ for some finite subset $T$ of $\Omega_k$. The result then follows from Theorem \ref{main}.
\end{proof}

\section{Acknowledgements}

We thank Alex Bartel, Martin Bright, Bas Edixhoven, Samir Siksek, Damiano Testa, Tony V\'arilly-Alvarado, and Bianca Viray for discussions on this work, and Ronald van Luijk for valuable suggestions. Finally, we wish to thank the referee for a careful reading of this paper and several helpful suggestions.

\bibliographystyle{plain}
\bibliography{brauermanin}

\end{document}